\newcommand{\M}{\text{M}}
\newcommand{\m}{\text{m}}
\newcommand{\T}{\text{T}}
\newcommand{\NH}{\text{NHIM}}
\newcommand{\zeroM}{\text{0}}
\newcommand{\oneM}{\text{2}}
\newcommand{\zerom}{\text{1}}
\newtheorem*{rep@theorem}{\rep@title}
\newcommand{\newreptheorem}[2]{%
\newenvironment{rep#1}[1]{%
 \def\rep@title{#2 \ref{##1}}%
 \begin{rep@theorem}}%
 {\end{rep@theorem}}}
\newtheorem{theorem}{Theorem}
\newtheorem{proposition}[theorem]{Proposition}
\newtheorem{lemma}[theorem]{Lemma}
\theoremstyle{definition}
\newtheorem{remark}[theorem]{Remark}
\newtheorem{definition}[theorem]{Definition}
\begin{document}
\title{Arnold diffusion for a complete family of perturbations with two independent
      harmonics\footnote{This work has been partially supported by
      the Spanish MINECO-FEDER grant MTM2015-65715 and
      the Catalan grant 2014SGR504. AD has been also partially supported by
      the Russian Scientific Foundation grant 14-41-00044
      at the Lobachevsky University of Nizhny Novgorod. RS has been also partially supported by CNPq, Conselho Nacional de Desenvolvimento Cient\'{i}fico e Tecnol\'{o}gico - Brasil.}}
\author{Amadeu Delshams\thanks{amadeu.delshams@upc.edu}}
\author{Rodrigo G. Schaefer\thanks{rodrigo.schaefer@upc.edu}}
\affil{Departament de Matem\`atiques and Lab of Geometry and Dynamical Systems\\
   Universitat Polit\`ecnica de Catalunya, Barcelona}
   
\maketitle
\centerline{\emph{To Rafael de la Llave on the occasion of his 60th birthday}}

\begin{abstract}
We prove that for any non-trivial perturbation depending on any two independent harmonics of a pendulum and a rotor there is global instability.
The proof is based on the geometrical method and relies on
the concrete computation of several scattering maps. A complete description of the
different kinds of scattering maps taking place as well as the existence of piecewise smooth global scattering maps
is also provided.
\par\vspace{12pt}
\noindent MSC2010 numbers: 37J40

\noindent\emph{Keywords}:
  Arnold diffusion,
  Normally hyperbolic invariant manifolds,
  Scattering maps


\end{abstract}

\section{Introduction}
\subsection{Main result}

We consider an \emph{a priori unstable} Hamiltonian with $2+1/2$ degrees of freedom
\begin{equation}
H_{\varepsilon}(p , q , I , \varphi , s) = \pm\left( \frac{p^2}{2} + \cos q  -1 \right) + \frac{I^2}{2} + \varepsilon h(q,\varphi,s)
\label{eq:hamil_system}
\end{equation}
consisting of a pendulum and a rotor plus a time periodic perturbation depending on two harmonics in the variables $(\varphi,s)$:
\begin{equation}
\begin{gathered}
h(q,\varphi,s)=f(q)g(\varphi, s),\\
f(q) = \cos q, \qquad g(\varphi , s) = a_1 \cos(k_1\varphi + l_1 s) + a_2\cos(k_2\varphi + l_2 s),\label{eq:g_general_case}
\end{gathered}
\end{equation}
with $k_1,\,k_2,\, l_1,\,l_2\in\mathbb{Z}$.

The goal of this paper is to prove that for \emph{any} non-trivial perturbation $a_1a_2 \neq 0$ depending on \emph{any} two \emph{independent} harmonics
$\arraycolsep=0.8pt\def\arraystretch{0.8}\left|\begin{array}{ll}k_1 & k_2 \\ l_1 & l_2\end{array}\right|\neq 0$,
there is global instability of the action $I$ for any $\varepsilon>0$ small enough.
\begin{theorem}\label{theo:main_theo}
Assume that $a_1a_2\neq 0$ and $k_1 l_2-k_2 l_1\neq 0$ in Hamiltonian \text{\eqref{eq:hamil_system}-\eqref{eq:g_general_case}}.
Then, for any $I^* > 0$, there exists $\varepsilon^* = \varepsilon^*(I^*, a_1, a_2)>0$ such that for any $\varepsilon$, $0<\varepsilon<\varepsilon^*$,
there exists a trajectory $\left(p(t),q(t),I(t), \varphi(t)\right)$ such that for some $T>0$
\begin{equation*}
I(0)\leq - I^*<I^*\leq I(T).
\end{equation*}
\end{theorem}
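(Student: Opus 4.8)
The plan is to run the geometric mechanism of Delshams--de la Llave--Seara together with the large gap machinery, so that everything is reduced to computing scattering maps on a normally hyperbolic invariant manifold (NHIM). \emph{Hyperbolic skeleton.} Because $f'(0)=-\sin 0=0$, the $3$-dimensional set $\tilde\Lambda=\{p=q=0\}$ is invariant for every $\varepsilon$ and is normally hyperbolic, with normal eigenvalues $\pm\sqrt{1+\varepsilon g(\varphi,s)}\approx\pm1$, and $H_\varepsilon|_{\tilde\Lambda}=\tfrac{I^2}{2}+\varepsilon g(\varphi,s)$ \emph{exactly}; for $\varepsilon=0$ its stable and unstable manifolds coincide along (pendulum separatrix)$\times$(rotor). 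First I would record the first order splitting through the Melnikov potential
\[
\mathcal L(I,\varphi,s)=-\int_{-\infty}^{+\infty}\bigl[\cos q_0(\sigma)-1\bigr]\,g\bigl(\varphi+I\sigma,\,s+\sigma\bigr)\,d\sigma
\]
with $q_0$ the separatrix; by the residue theorem this equals $\sum_{j=1,2}a_j\,\mathcal A(k_jI+l_j)\cos(k_j\varphi+l_js)$ with $\mathcal A(\mu)=2\pi\mu/\sinh(\pi\mu/2)$, which is even, analytic and nowhere zero, so that each coefficient $a_j\mathcal A(k_jI+l_j)$ is bounded away from $0$ for $|I|\le I^*$.

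\emph{Scattering maps and inner dynamics.} For every non-degenerate critical point $\tau^*(I,\varphi,s)$ of $\tau\mapsto\mathcal L(I,\varphi-I\tau,s-\tau)$ one obtains a scattering map $S_\varepsilon=\mathrm{Id}+\varepsilon\,J\nabla\mathcal L^*+O(\varepsilon^2)$ on an open part of $\tilde\Lambda$, where $\mathcal L^*(I,\varphi,s)=\mathcal L(I,\varphi-I\tau^*,s-\tau^*)$ is the reduced Poincar\'e function; in particular $S_\varepsilon$ changes $I$ by $\varepsilon\,\partial_\varphi\mathcal L^*+O(\varepsilon^2)$. The core of the proof is to show that the several branches $\tau^*$ yield a piecewise smooth global family of scattering maps such that over \emph{every} $I_0\in[-I^*,I^*]$ some branch has $\partial_\varphi\mathcal L^*$ positive and bounded away from $0$ at a point reachable from the current orbit by the inner flow. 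Where one harmonic dominates and $k_j\ne0$ this is immediate, since then $\partial_\varphi\mathcal L^*\approx-a_jk_j\mathcal A(k_jI+l_j)\sin(k_j\varphi+l_js)$. The two delicate features are: (i) the at most two \emph{inner resonances} $I=-l_1/k_1$ and $I=-l_2/k_2$, which are distinct because $k_1l_2-k_2l_1\ne0$ and at which $H_\varepsilon|_{\tilde\Lambda}$ develops a pendulum-like island of width $O(\sqrt\varepsilon)$ (the large gap; higher order resonances give only $O(\varepsilon)$-wide gaps, crossed directly by $S_\varepsilon$); and (ii) the finitely many \emph{amplitude crossings} where $|a_1\mathcal A(k_1I+l_1)|=|a_2\mathcal A(k_2I+l_2)|$, at which the dominant Melnikov extremum may degenerate. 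At (i) I would use the large gap argument: inside the island there survive secondary tori and a hyperbolic periodic orbit, and since $\mathcal L^*$ is not constant along the resonance, the image under $S_\varepsilon$ of an invariant object on one side meets the stable manifold of one farther in, so a finite chain bridges the gap. At (ii) I would switch to another branch $\tau^*$, which is non-degenerate precisely because the harmonics are independent. This is where $a_1a_2\ne0$ and $k_1l_2-k_2l_1\ne0$ are used essentially: if, say, $k_1=0$ then the first harmonic cannot move $I$ at all, independence forces $k_2\ne0$, and then the second harmonic alone carries the instability, with no amplitude crossings to fear.

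\emph{Shadowing and the main obstacle.} Concatenating inner orbit segments with scattering jumps along the resulting piecewise smooth global scattering map produces a pseudo-orbit of $H_\varepsilon|_{\tilde\Lambda}$ whose $I$-coordinate increases from below $-I^*$ to above $I^*$; a standard shadowing lemma for NHIMs equipped with a scattering map then upgrades it to a genuine trajectory $(p(t),q(t),I(t),\varphi(t))$ of $H_\varepsilon$ with $I(0)\le-I^*<I^*\le I(T)$ for some $T>0$. The hard part will be the middle step: identifying all the relevant scattering maps and verifying the required non-degeneracy and sign conditions for $\partial_\varphi\mathcal L^*$ at \emph{every} $I\in[-I^*,I^*]$ and for \emph{all} admissible integer vectors $(k_1,l_1,k_2,l_2)$, together with the large gap analysis at the two inner resonances — which is exactly the detailed classification of scattering maps (and the piecewise smooth global scattering map) announced in the abstract.
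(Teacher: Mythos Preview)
Your outline is correct and follows the same geometric mechanism as the paper: the NHIM $\tilde\Lambda=\{p=q=0\}$, the Melnikov potential, scattering maps via critical points $\tau^*$ of $\tau\mapsto\mathcal L(I,\varphi-I\tau,s-\tau)$, the two first-order inner resonances, transversality of scattering-map images with inner invariant tori, and shadowing. Two tactical differences are worth noting.

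First, the paper does \emph{not} treat all integer vectors $(k_1,l_1,k_2,l_2)$ at once. It performs a linear symplectic change $\bar\varphi=k_1\varphi+l_1s$, $\bar s=r\bar\varphi-(k_2\varphi+l_2s)$ with $r=k_2/k_1\in[0,1]$, reducing everything to the single-parameter family $g=a_1\cos\bar\varphi+a_2\cos(r\bar\varphi-\bar s)$; the case $r=0$ was handled in an earlier paper, so only $r\in(0,1]$ is treated, and the detailed computations are written out for $r=1$. This buys a concrete two-harmonic Melnikov potential with coefficients $A_1(I),A_2(I)$ and makes the ``hard part'' you identify tractable.

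Second, that hard part is carried out not by an abstract amplitude-crossing/branch-switching argument but through a geometric analysis of \emph{crests} (the locus $\partial_\tau\mathcal L|_{\tau=0}=0$) and their intersections with \emph{NHIM lines} $\{(\varphi-I\tau,s-\tau)\}$, classified via the functions $\alpha(I),\beta(I)$. The key output is Proposition~\ref{prop:orb_cres}: for the scattering map $\mathcal S_1$ associated to the crest $\mathcal C_{\m}$ one has $\partial_\theta\mathcal L^*>0$ on an explicit interval $(\pi,\theta_+)$ for \emph{every} $I$. This replaces your ``some branch has $\partial_\varphi\mathcal L^*$ positive and bounded away from $0$'' by a single, globally usable scattering map. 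The resonance crossing is then a direct Poisson-bracket computation $\{F^j,\mathcal L^*\}\ne0$ near $I=0,1$, rather than an appeal to the full secondary-tori large-gap machinery you sketch; the latter would also work, but the paper's model is simple enough that the lighter argument suffices.
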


\begin{remark}
For a rough estimate of $\varepsilon^* \sim \exp( - \pi I^* /2 ) $ at least for $\left|a_1/a_2\right| < 0.625$, $k_1 = l_2 = 1$ and $l_1 = k_2 = 0$, and of the diffussion time $T = T(\varepsilon^* , I^* , a_1 , a_2) \sim (\T_s(I^* , a_1 , a_2)/\varepsilon)\log( C(I^* , a_1 , a_2) /\varepsilon)$ the reader is referred to \cite{Delshams2017}.
Analogous estimates could be obtained for all the other values of the parameters. 
\end{remark}

The proof is based on the geometrical method introduced in~\cite{Seara2006} and relies on
the concrete computation of several \emph{scattering maps}. A scattering map is a map
of transverse homoclinic orbits to a \emph{normally hyperbolic invariant manifold} (NHIM).
For Hamiltonian~\eqref{eq:hamil_system}, the NHIM turns out to be simply
\begin{equation}
\tilde{\Lambda}_{\varepsilon}=\tilde{\Lambda} = \left\{( 0 , 0 , I , \varphi , s ) : ( I , \varphi , s ) \in \mathbb{R}\times \mathbb{T}^2\right\}.\label{eq:NHIM_manifold}
\end{equation}
In the unperturbed case, i.e., $\varepsilon = 0$, for any $I^* >0$ the NHIM $\tilde{\Lambda}$ possesses a 4D \emph{separatrix},
that is to say, coincident stable and unstable invariant manifolds
\begin{equation*}
W^0\tilde{\Lambda} = \left\{ ( p_0( \tau ) , q_0( \tau ) , I , \varphi , s ) : \tau\in\mathbb{R} , I\in\left[-I^* , I^* \right] , ( \varphi , s ) \in \mathbb{T}^2 \right\},
\end{equation*}
where $( p_0, q_0)$ are the separatrices to the saddle equilibrium point of the pendulum
\begin{equation*}
\left(p_0(t) , q_0(t)\right) = \left(\frac{\pm 2}{\cosh t} , 4 \arctan\textrm{e}^{ \pm t}\right).
\end{equation*}

In the perturbed case, i.e., for small $\varepsilon > 0 $, $W^u(\tilde{\Lambda}_{\varepsilon})$
and $W^s(\tilde{\Lambda}_{\varepsilon})$ do not coincide (this is the so-called
splitting of separatrices), and every local transversal intersection between them gives
rise to a (local) scattering map which is simply the correspondence between a
past asymptotic motion in the NHIM to the corresponding future asymptotic motion following a homoclinic orbit.
Since the NHIM has also an inner dynamics, an adequate combination of these two dynamics on the NHIM, the inner one and the outer one provided by the scattering map, generates the global instability (also called in short \emph{Arnold diffusion}) as long as the outer dynamics does not preserve the invariant objects of the inner dynamics. 

The inner motion is described in Section~\ref{sec:inner}, the scattering maps in Section~\ref{sec:scattering} and the absence of
invariant sets in both dynamics is checked in Section~\ref{sec:diffusion},
which also includes the proof of Theorem~\ref{theo:main_theo}.
Section~\ref{sec:piecewise} deals with the construction of a piecewise smooth global scattering map
which is introduced as a possible new tool to design fast and simple paths of global instability.
We finish this Introduction with some remarks
about the necessity of the assumptions, as well as other features of the scattering map and
a discussion of the model chosen and related work.

\subsection{Necessity of the assumptions}
If the determinant $\Delta:=k_1 l_2-k_2 l_1$ or some coefficient $a_1$, $a_2$ vanishes, for instance, if there is only one harmonic in $g$, there is no
global instability for the action $I$. Indeed, looking at the equations associated to Hamiltonian~\eqref{eq:hamil_system}
\begin{align}
\label{eq:hamil_equations}
\dot{q} &= \pm p&\dot{p} &= \left[ \pm 1 + \varepsilon\left(a_1\cos(k_1\varphi + l_1 s) + a_2\cos(k_2\varphi + l_2 s)\right)\right]\sin q\nonumber \\
\dot{\varphi} &= I&\dot{I} &= \varepsilon\cos q \left(k_1 a_1\sin(k_1\varphi + l_1 s) + k_2 a_2\sin(k_2\varphi + l_2 s)\right)\\
\dot{s} &= 1\nonumber&&
\end{align}
this is clear for $k_1=k_2=0$, since in this case $I$ is a constant of motion. 
If $k_1$ or $k_2\neq 0$, say $k_1\neq 0$,
the change of variables
\begin{equation*}
\bar{\varphi} = k_1\varphi + l_1 s, \quad\quad r\bar{\varphi} - \bar{s} = k_2\varphi + l_2s, \quad\quad \bar{I}= k_1 I + l_1,
\end{equation*}
where $r = k_2/k_1$ can be assumed to satisfy $0\leq r \leq 1$ without loss of generality, casts system~\eqref{eq:hamil_equations} into
\begin{align*}
\dot{q} &=\pm p& \dot{p} &= \left[\pm 1  + \varepsilon \left( a_1 \cos\bar{\varphi} + a_2\cos(r\bar{\varphi} - \bar{s})\right)\right] \sin q\\
\dot{\bar{\varphi}} &= \bar{I}&\dot{\bar{I}} &= \varepsilon k_1^2\cos q\left(a_1\sin \bar{\varphi}  + r a_2 \sin(r\bar{\varphi} - \bar{s})\right)\\
\dot{\bar{s}} &= \Delta/k_1&&
\end{align*}
which is a Hamiltonian system with the Hamiltonian given by
\begin{equation}
\label{eq:bar_hamil_system}
\bar{H}_{\varepsilon}(p,q,\bar{I},\bar{\varphi},\bar{s}) = \pm \left(\frac{p^2}{2} + \cos q - 1 \right) + \frac{\bar{I}^2}{2} + \varepsilon k_1^2\cos q\left(a_1\cos\bar{\varphi} + a_2\cos(r\bar{\varphi} - \bar{s})\right).
\end{equation}
If $\Delta=0$ Hamiltonian~\eqref{eq:bar_hamil_system} is autonomous with 2 degrees of freedom,
and therefore a global drift for the action $I$ is not possible. Only drifts of size $\sqrt{\varepsilon}$ are possible due to KAM theorem.
Analogously one easily checks that for $a_1a_2=0$ Hamiltonian~\eqref{eq:hamil_system} is integrable or autonomous.

\subsection{Reduction of the harmonic types}
Under the hypothesis $\left(k_1 l_2-k_2 l_1\right) a_1 a_2\neq 0$ of Theorem~\ref{theo:main_theo}, we first
notice that the case $k_2 = 0 $ of Theorem~\ref{theo:main_theo} is already proven in~\cite{Delshams2017}.
Indeed, $k_2 = 0 $ implies $r:=k_2/k_1 = 0$ and it turns out from~\eqref{eq:bar_hamil_system} that Hamiltonian~\eqref{eq:hamil_system}
is equivalent to the one with $k_1=1, k_2 = 0,l_1=0,l_2= 1$:
\begin{equation}
H_{\varepsilon}(p , q , I , \varphi , t) = \pm\left( \frac{p^2}{2} + \cos q  -1 \right) + \frac{I^2}{2}
+ \varepsilon \cos q \left(a_1 \cos \varphi + a_2\cos s\right),
\label{eq:old_hamil_system}
\end{equation}
which is just the Hamiltonian studied in \cite{Delshams2017}.
Therefore, we only need to prove
Theorem~\ref{theo:main_theo} for $k_1 k_2 \neq 0$ or equivalently for $r\in (0,1]$.
For the sake of clarity we will explain in full detail and prove Theorem~\ref{theo:main_theo}
along Sections~\ref{sec:inner}-\ref{sec:diffusion} just for $r=1$, which by~\eqref{eq:bar_hamil_system}
is equivalent to the case $k_1=1, k_2 = 1,l_1=0,l_2 = -1$:
\begin{equation}
H_{\varepsilon}(p , q , I , \varphi , t) = \pm\left( \frac{p^2}{2} + \cos q  -1 \right) + \frac{I^2}{2}
+ \varepsilon \cos q \left(a_1 \cos \varphi + a_2\cos (\varphi - s)\right).
\label{eq:new_hamil_system}
\end{equation}
To finish the proof of Theorem~\ref{theo:main_theo}, in Section~\ref{sec:diffusion} we will sketch the
modifications needed for the case $r\in (0,1)$.

\subsection{Scattering map types}
By the definition given at the beginning of Section~\ref{sec:scattering}, a scattering map is
in principle only \emph{locally} defined, that is, for a small ball of values of the variables
$(I,\varphi,s)$ or $(I,\theta=\varphi-Is)$, since it depends on a non-degenerate critical point
$\tau^*=\tau^*(I,\varphi,s)$ of a real function~\eqref{eq: SM_critical_point}, depending smoothly
on the variables $(I,\varphi,s)$, already introduced in~\cite{Seara2006}.
In the study carried out in Section~\ref{sec:scattering}, it
will be described whether, in terms of the parameter $\mu:=a_1/a_2$ and the variable $I$, a local
scattering map can or cannot be smoothly defined for all the values of the angles $(\varphi,s)$ or
$\theta=\varphi-Is$, becoming thus a \emph{global} or \emph{extended} scattering map. This description will depend essentially
on a geometrical characterization of the function~$\tau^*(I,\varphi,s)$ in terms of the
intersection of \emph{crests} and \emph{NHIM lines}, following~\cite{Delshams2011}.
Any degeneration of the critical point $\tau^*=\tau^*(I,\varphi,s)$ may give rise to more
non-degenerate critical points and a bifurcation to \emph{multiple} local scattering maps
or to a non global scattering map. Different critical points $\tau^*=\tau^*(I,\varphi,s)$
give rise to different local scattering maps, and putting together different local scattering maps,
one can sometimes obtain \emph{piecewise smoth} global scattering maps, which are very useful
to design paths of instability for the action $I$, and are simply called diffusion paths.

For instance, in the paper~\cite{Delshams2017} devoted to the Hamiltonian~\eqref{eq:old_hamil_system},
it was proven that for $0<\mu=a_1/a_2<0.625$, there exist two different global scattering maps.
Among the different kinds of associated orbits of these scattering maps, there appeared
two of them called \emph{highways}, where the drift of the action $I$ was very fast and simple.
As will be described in Section~\ref{sec:scattering}, such highways do not appear for
Hamiltonian~\eqref{eq:new_hamil_system}. Nevertheless, as will be proven in Section~\ref{sec:piecewise},
there exist piecewise smooth global scattering maps, and the possible diffusion along
the discontinuity sets opens the possibility of applying the theory of
piecewise smooth dynamical systems~\cite{Filippov88}.

\subsection{About the model chosen and related work}
Hamiltonian~\eqref{eq:hamil_system} is a standard example of an \emph{a priori unstable} Hamiltonian
system~\cite{ChierchiaGallavotti} formed by a pendulum, a rotor and a perturbation. It is usual
in the literature to choose a perturbation depending periodically only on the
positions---which turn out to be angles in our case---and time. Our perturbation $h(q,\varphi,s)$ \eqref{eq:g_general_case}
is a little bit special since it is a product of a function $f(q)$ times a function $g(\varphi,s)$. This
choice makes easier the computations of the Poincar\'e-Melnikov potential~\eqref{eq:our_meln_potential},
which is based on the Cauchy's residue theorem.
Theorem~\ref{theo:main_theo} can be easily generalized to any trigonometric polynomial or meromorphic function $f(q)$,
although the computations of poles of high order become more complicated. In the same way, it could also be
generalized to more general perturbations $h(q,\varphi,s)$, as long that $h$ is a trigonometric polynomial or meromorphic in $q$.
The dependence on more than two harmonics gives rise to the appearance of more resonances in the inner dynamics,
which requires more control of their sizes, see for instance~\cite{DelshamsS97,Delshams2009}.
Apart from more difficulty in the computations of the Poincar\'e-Melnikov
potential and the inner Hamiltonian, we do not foresee substantial changes, so we believe that
Hamiltonian~\eqref{eq:hamil_system} could be considered as a paradigmatic example
of an \emph{a priori unstable} Hamiltonian system.

This paper is a natural culmination of~\cite{Delshams2017}, which dealt
with the simpler Hamiltonian~\eqref{eq:old_hamil_system}, and where a detailed description of NHIM lines
and crests was carried out. An ``optimal'' estimate of the diffusion time close to some special orbits of the scattering map,
called \emph{highways}, was also given there. The study in this paper of Hamiltonian~\eqref{eq:new_hamil_system} is more
complicated, due to a greater complexity of the evolution of the NHIM lines and crests with respect to the action $I$
and the parameters of the system. In particular, the absence of highways prevents us of showing an estimate
of diffusion time close to them.

The paper~\cite{Delshams2017} also contains a fairly extensive list of references about global instability. Let us
simply mention some new references that are not there, like~\cite{Davletshin2016} which contains a similar approach
to the function~$\tau^*$ of~\cite{Seara2006} and the crests of~\cite{Delshams2011}, and the recent preprints
\cite{GelfreichT14,LazzariniMS15,Marco16,GideaM17,Cheng17}
involving the geometrical method or variational methods.

We finish this introduction by noticing that in this paper we stress the interaction between {\NH} lines and crests,
since this allows us to describe the diverse scattering maps, as well as their domains, that appear in our problem.
In more complicated models of Celestial Mechanics the Melnikov potential is not available.
In these cases the computations of scattering maps rely on the numerical computation of invariant manifolds of a NHIM
or some of its selected invariant objects, and the search of diffusion orbits is performed in a more crafted way (see \cite{Canalias2006,delshams2008b,delshams2013,capinski2017,delshams201629}).
\section{Inner dynamics}
\label{sec:inner}

The inner dynamics is derived from the restriction of $H_{\varepsilon}$ in \eqref{eq:new_hamil_system} and its equations to $\tilde{\Lambda}$, that is,
\begin{equation}
K(I,\varphi,s) = \frac{I^{2}}{2} + \varepsilon\left(a_{1}\cos\varphi + a_{2}\cos(\varphi -s)\right), \label{eq:hamiltonian_inner}
\end{equation}
and differential equations
\begin{equation}
\dot{\varphi} = I \quad\quad \dot{s} = 1\quad\quad\dot{I} =\varepsilon\left(a_{1}\sin\varphi + a_{2}\sin(\varphi-s)\right).\label{eq:equations_inner}
\end{equation}

Note that in this case the inner dynamics is slightly more complicated to describe than in \cite{Delshams2017} where there was just one resonance, namely, in $I=0$.
In the current case we have two resonant regions of size $\mathcal{O}(\sqrt{\varepsilon})$ where secondary KAM tori appear.
To describe these regions, we use normal forms as in \cite{Seara2006}.

Consider the autonomous extended Hamiltonian
\begin{equation}
\overline{K}(I,A,\varphi,s) = \frac{I^2}{2} + A + \varepsilon\left(a_{1}\cos\varphi + a_{2}\cos(\varphi -s)\right),\label{eq:bar_K}
\end{equation}
with the differential equations
\begin{align*}
\dot{\varphi} =& I &\dot{I} =&\varepsilon\left(a_{1}\sin\varphi + a_{2}\sin(\varphi-s)\right)\\
\dot{s} =& 1  &\dot{A} =& -\varepsilon a_{2}\sin(\varphi-s).
\end{align*}
This system is equivalent to the system represented by \eqref{eq:hamiltonian_inner}+\eqref{eq:equations_inner}.
We wish to eliminate the dependence on the angle variables.
Consider a change of variables $\varepsilon$-close to the identity $$(\varphi,s , I , A ) = g(\phi , \sigma , J , B) = (\phi , \sigma , J , B) + \mathcal{O}(\varepsilon)$$
such that it is the one-time flow for a Hamiltonian $\varepsilon G$, i.e.,  $g = g_{t=1}$, where $g_t$ is solution of
\begin{equation*}
\frac{dg_t}{dt} = J_0\nabla\varepsilon G\circ g_t, \text{ where } J_0 \text{ is the symplectic matrix }\begin{pmatrix}
 0&1 \\
 -1&0
\end{pmatrix}.
\end{equation*}
Composing $\overline{K}$ with $g$ and expanding in a Taylor series around $t=0$, one obtains
\begin{equation*}
\overline{K}\circ g = \overline{K} + \left\{\overline{K},\varepsilon G\right\} + \frac{1}{2}\left\{\left\{\overline{K},\varepsilon G\right\} , \varepsilon G\right\} + \dots,
\end{equation*}
where $\left\{\cdot\right\}$ is the Poisson bracket.
Using the expansion \eqref{eq:bar_K} of $\overline{K}$, the equation above can be written as
\begin{equation}
\begin{split}
\overline{K}\circ g = \frac{J^2}{2} + B + \varepsilon\left( a_{1}\cos\phi + a_{2}\cos(\phi-\sigma) + \left\{\frac{J^2}{2} + B ,G\right\} \right) \\+ \frac{\varepsilon^2}{2}\left\{\left\{\frac{J^2}{2} + B,G\right\},G\right\} + \mathcal{O}(\varepsilon^3).\label{eq:exp_K}
\end{split}
\end{equation}
We want to find $G$ such that $a_{1}\cos\phi + a_{2}\cos(\phi-\sigma) + \left\{\frac{J^2}{2} + B ,G\right\}=0$, or equivalently,
\begin{equation*}
J\frac{\partial G}{\partial\phi} + \frac{\partial G}{\partial \sigma } = a_{1}\cos\phi + a_{2}\cos(\phi-\sigma).
\end{equation*}
Given $a <b<1$, consider any function $\Psi \in C^{\infty}(\mathbb{R})$ satisfying $\Psi(x) = 1$ for $x\in \left[-a,a\right]$ and $\Psi(x) = 0$ for $\left|x\right| > b$ and introduce
\begin{equation*}
G(J,B,\phi,\sigma) := \frac{a_{1}}{J}\left(1-\Psi(J)\right)\sin\phi + \frac{a_{2}}{J-1}\left(1-\Psi(J-1)\right)\sin(\phi - \sigma),
\end{equation*}
Substituting the above function $G(J,B,\phi,\sigma)$ in \eqref{eq:exp_K} we have
\begin{eqnarray}
\overline{K}\circ g =\frac{J^2}{2} + B + \mathcal{O}(\varepsilon^2) , \label{eq:invariant_tori_nr}
\end{eqnarray}
for $J,J-1 \notin [-b,b]$.
For $J\in [-a,a]$,
\begin{equation}
\overline{K}\circ g = \frac{J^2}{2} + B + \varepsilon a_{1}\cos\phi + \mathcal{O}(\varepsilon^2). \label{eq:ressonance_1}
\end{equation}
Finally, for $J-1 \in [-a,a]$,
\begin{equation}
\overline{K}\circ g = \frac{J^2}{2} + B + \varepsilon a_{2}\cos(\phi-\sigma) + \mathcal{O}(\varepsilon^2). \label{eq:ressonance_2}
\end{equation}
From \eqref{eq:ressonance_1} and \eqref{eq:ressonance_2}, one sees that on $J = 0$ and $J = 1$ there are resonances of first order in $\varepsilon$ with a pendulum-like behavior.

Coming back to the original variables, three kinds of invariant tori are obtained.
For the first order resonance $ I = 0 $, there is a positive $\overline{a}$ such that the invariant tori are given by $F^{0}(I,\varphi,s)=~\text{constant}$ with
\begin{equation}
F^0(I,\varphi,s) = \frac{I^2}{2} + \varepsilon a_{1}\cos\varphi  + \mathcal{O}(\varepsilon^2).\label{eq:invariant_tori_i=0}
\end{equation}
for $I \in \left[-\overline{a},\overline{a}\right]$

Analogously, for the first order resonance $ I = 1 $, with
\begin{equation*}
F^1(I,\varphi,s) = \frac{(I-1)^2}{2} + \varepsilon a_{2} \cos(\varphi-s) + \mathcal{O}(\varepsilon^2),\label{eq:invariant_tori_i=1}
\end{equation*}
for $I-1 \in \left[-\overline{a},\overline{a}\right]$.

\begin{remark}
As commented in \cite{Seara2006}, there exists a \emph{secondary} resonance in $I = 1/2$, but the size of the gap in its resonant region is much smaller than the size of gaps in resonant regions associated to $I = 0$ and $I= 1$.
\end{remark}

\begin{remark}In a more general case with $r\neq 1$, the resonances take place in $I = 0$ and $I = 1/r$.\label{rem:r_1}
\end{remark}
From \eqref{eq:invariant_tori_nr}, on the non-resonant region the invariant tori has equations $F^{\text{nr}}(I) =~\text{constant}$ with
\begin{equation*}
F^{\text{nr}}(I) = \frac{I^2}{2} + \mathcal{O}(\varepsilon^2).\label{eq:invariant_tori_nr_b}
\end{equation*}
An illustration of the inner dynamics is displayed in Figure \ref{fig:inner_dynamics}.

\begin{figure}[h]
\centering
\includegraphics[scale=0.33]{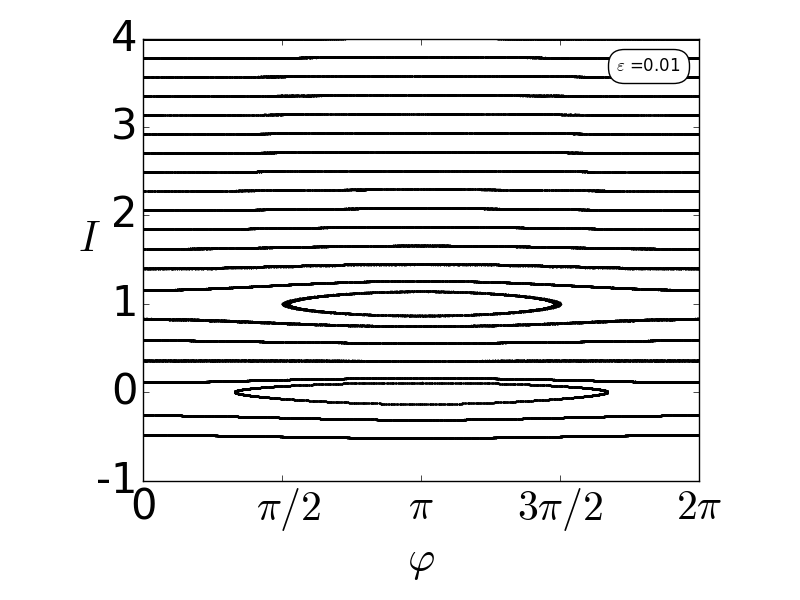}
\caption{Plane $\varphi \times I$ of inner dynamics for $\mu = 0.75$ and $\varepsilon = 0.01$.}\label{fig:inner_dynamics}
\end{figure}

\section{Scattering map}
\label{sec:scattering}

\subsection{Definition of scattering map }

We are going to explore the properties of the scattering maps of Hamiltonian \eqref{eq:new_hamil_system}.
The notion of scattering map on a {\NH} was introduced in \cite{Delshams2000}.
Let $W$ be an open set of $\left[-I^* , I^*\right] \times \mathbb{T}^2$ such that the invariant manifolds of the {\NH} $\tilde{\Lambda}$ introduced in~\eqref{eq:NHIM_manifold} intersect transversally along a homoclinic manifold $\Gamma = \left\{\tilde{z}(I,\varphi,s;\varepsilon) , (I,\varphi,s)\in W\right\}$ so that for any $\tilde{z}\in\Gamma$ there exist unique $\tilde{x}_{+,-} = \tilde{x}_{+,-}(I,\varphi,s;\varepsilon)\in\tilde{\Lambda}$ such that $\tilde{z}\in W_{\varepsilon}^{s}(x_-)\cap W_{\varepsilon}^{u}(\tilde{x}_+)$.
Let
$$H_{+,-} = \bigcup\left\{\tilde{x}_{+,-}(I,\varphi , s ; \varepsilon) : (I,\varphi,s)\in W\right\}.$$
The scattering map associated to $\Gamma$ is the map
\begin{eqnarray*}
S: H_- & \longrightarrow & H_+\\
\tilde{x}_- &\longmapsto & S(\tilde{x}_-) = \tilde{x}_+.
\end{eqnarray*}

For the characterization of the scattering maps, it is required to select the homoclinic manifold $\Gamma$ and this is done using the Poincar\'{e}-Melnikov theory. From \cite{Seara2006,Delshams2011}, we have the following proposition

\begin{proposition}\label{prop:melnpot}
Given $(I,\varphi,s)\,\in\,\left[-I^{*},I^{*}\right]\,\times\,\mathbb{T}^{2}$, assume that the real function
\begin{equation}\label{eq: SM_critical_point}
\tau\,\in\,\mathbb{R}\,\longmapsto\,\mathcal{L}(I,\varphi-I\,\tau,s-\tau)\,\in\,\mathbb{R}
\end{equation}
has a non degenerate critical point $\tau^{*}\, =\, \tau^*(I,\varphi,s)$, where
\begin{equation*}
\mathcal{L}(I,\varphi,s):=\int_{-\infty}^{+\infty}\left(f(q_{0}(\sigma)) - f(0)\right)g(\varphi+I\sigma,s+\sigma;0)d\sigma.
\end{equation*}
Then, for $0\,<\,\varepsilon$ small enough, there exists a unique transversal homoclinic point $\tilde{z}$ to $\tilde{\Lambda}_{\varepsilon}$ of Hamiltonian~\eqref{eq:hamil_system}, which is $\varepsilon$-close to the point
$\tilde{z}^{*}(I,\varphi,s)\,=\,(p_{0}(\tau^{*}),q_{0}(\tau^{*}),I,\varphi,s)\,\in\,W^{0}(\tilde{\Lambda})$:
\begin{equation*}
\tilde{z}=\tilde{z}(I,\varphi,s)=(p_{0}(\tau^{*})+O(\varepsilon), q_{0}(\tau^{*})+O(\varepsilon),I,\varphi,s)\,\in\,W^{u}(\tilde{\Lambda_ {\varepsilon}})\,\pitchfork\,W^{s}(\tilde{\Lambda_{\varepsilon}}).
\end{equation*}
\end{proposition}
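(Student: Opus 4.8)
\emph{Proof sketch.} The plan is to follow the Poincar\'e--Melnikov method for normally hyperbolic invariant manifolds developed in \cite{Delshams2000,Seara2006} (see also \cite{Delshams2011}), adapted to the product form $h=fg$ of the perturbation in \eqref{eq:g_general_case}. First I would record the persistence of the hyperbolic structure: a direct inspection of \eqref{eq:hamil_equations} shows that $\{p=q=0\}$ is invariant for every $\varepsilon$, so $\tilde\Lambda_\varepsilon=\tilde\Lambda$ as in \eqref{eq:NHIM_manifold}; since the pendulum provides a uniform hyperbolic splitting in the $(p,q)$ directions while the rotor dynamics stays bounded, $\tilde\Lambda$ is normally hyperbolic, and by the standard theory its local stable and unstable manifolds $W^{u,s}(\tilde\Lambda_\varepsilon)$ are smooth and $O(\varepsilon)$-close, in the $C^{1}$ topology, to the unperturbed separatrix $W^{0}(\tilde\Lambda)$.

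Next comes the computation of the splitting. One parametrizes $W^{u}(\tilde\Lambda_\varepsilon)$ and $W^{s}(\tilde\Lambda_\varepsilon)$ as graphs over $W^{0}(\tilde\Lambda)$, using $(p_{0}(\tau),q_{0}(\tau),I,\varphi,s)$ as a parametrization of the separatrix. Because the extended phase space is symplectic and these invariant manifolds are exact Lagrangian and flow-invariant, each of them is, near $W^{0}(\tilde\Lambda)$, the graph of the differential of a generating function; the difference of the two generating functions is the splitting potential, whose first-order term in $\varepsilon$ equals, up to sign, the Melnikov potential
\begin{equation*}
\mathcal{L}(I,\varphi,s)=\int_{-\infty}^{+\infty}\bigl(f(q_{0}(\sigma))-f(0)\bigr)\,g(\varphi+I\sigma,s+\sigma;0)\,d\sigma .
\end{equation*}
This improper integral, together with the ones obtained by differentiating under the integral sign, converges absolutely and uniformly on compact sets of $(I,\varphi,s)$ because $f(q_{0}(\sigma))-f(0)=\cos q_{0}(\sigma)-1$ decays exponentially as $\sigma\to\pm\infty$, the pendulum separatrix approaching the saddle exponentially fast; this is exactly where the product structure $h=fg$ makes $\mathcal{L}$ computable by residues. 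One also checks that the remainder of the splitting potential is $O(\varepsilon^{2})$ in the $C^{2}$ topology, uniformly on the relevant compact range of $I$.

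To pass from critical points to transversal homoclinics, note that the unperturbed flow on $W^{0}(\tilde\Lambda)$ acts by $(\tau,I,\varphi,s)\mapsto(\tau+t,I,\varphi+It,s+t)$, so, once the asymptotic data $(I,\varphi,s)$ are fixed, the only free parameter left in the splitting equation is the sliding along this direction, and the component of the splitting along it equals $\tfrac{d}{d\tau}\mathcal{L}(I,\varphi-I\tau,s-\tau)+O(\varepsilon^{2})$. Hence, given $(I,\varphi,s)$ and a non-degenerate critical point $\tau^{*}=\tau^{*}(I,\varphi,s)$ of $\tau\mapsto\mathcal{L}(I,\varphi-I\tau,s-\tau)$, the implicit function theorem applied to the full splitting equation produces, for $\varepsilon>0$ small enough, a locally unique zero $\tau^{*}+O(\varepsilon)$, i.e.\ a locally unique homoclinic point over $(I,\varphi,s)$ of the form $\tilde z=(p_{0}(\tau^{*})+O(\varepsilon),q_{0}(\tau^{*})+O(\varepsilon),I,\varphi,s)$; the non-degeneracy of $\tau^{*}$ (nonvanishing second derivative, the remaining directions being controlled by the graph structure and the $C^{2}$ bound) makes this zero simple, so $W^{u}(\tilde\Lambda_\varepsilon)\pitchfork W^{s}(\tilde\Lambda_\varepsilon)$ at $\tilde z$, and the implicit function theorem with parameters yields the smooth dependence of $\tilde z$ on $(I,\varphi,s)$.

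The main obstacle I anticipate is not any isolated step but the uniformity bookkeeping behind the $O(\varepsilon^{2})$ estimates: one must bound the $C^{2}$ size of the remainder of the splitting potential uniformly in $(I,\varphi,s)$ over the compact range of $I$ and uniformly as the homoclinic excursion time grows, and at the same time make sure that the reduction to the scalar function $\tau\mapsto\mathcal{L}(I,\varphi-I\tau,s-\tau)$ really captures all the directions relevant to transversality. Both points are taken care of by the standard estimates of \cite{Delshams2000,Seara2006}, on which the argument ultimately rests.
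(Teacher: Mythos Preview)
Your sketch is correct and follows exactly the Poincar\'e--Melnikov approach of \cite{Delshams2000,Seara2006,Delshams2011} on which the paper relies; in fact the paper does not supply its own proof of this proposition but simply quotes it from those references, so there is nothing further to compare.
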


The function $\mathcal{L}$ is called the \emph{Melnikov potential} of Hamiltonian \eqref{eq:hamil_system}.
For the concrete Hamiltonian~\eqref{eq:new_hamil_system} it takes the form
\begin{equation}
\mathcal{L}(I,\varphi,s) = A_{1}(I)\cos\varphi + A_{2}(I)\cos(\varphi - s),\label{eq:our_meln_potential}
\end{equation}
where
\begin{equation*}
A_{1}(I) = \frac{2\pi I a_{1}}{\sinh(\pi I/2)} \quad \text{ and }\quad A_{2}(I) = \frac{2\pi(I-1)a_{2}}{\sinh(\pi(I-1)/2)}.
\end{equation*}
The homoclinic manifold $\Gamma$ is characterized by the function $\tau^*(I,\varphi,s)$.
Once a $\tau^*(I,\varphi,s)$ is chosen, which under the conditions of Proposition~\ref{prop:melnpot}, is locally smoothly well defined, by the geometric properties of the scattering map, see \cite{Delshams2008,Delshams2009,Delshams2011}, the scattering map has the explicit local form

\begin{equation*}
S(I,\varphi,s) = \left(I + \varepsilon\frac{\partial L^*}{\partial \varphi}(I,\varphi,s) + \mathcal{O}(\varepsilon^2) , \varphi - \varepsilon\frac{\partial L^*}{\partial I}(I,\varphi,s) +\mathcal{O}(\varepsilon^2) , s \right),
\end{equation*}
where
\begin{equation}
L^*(I,\varphi,s) = \mathcal{L}(I,\varphi - I\tau^*(I,\varphi,s) , s-\tau^*(I,\varphi,s)).\label{eq:L^*-def}
\end{equation}

Notice that the variable $s$ is fixed under the scattering map.
As a consequence \cite{Delshams2011,Delshams2017}, introducing the variable
\begin{equation*}
\theta = \varphi - I s 
\end{equation*}
and defining the \emph{reduced Poincar\'{e} function}
\begin{equation}\label{eq:reduced_poincare_function}
\mathcal{L}^{*}(I,\theta) := L^*(I,\varphi - Is , 0) = L^*(I,\varphi,s),
\end{equation}
in the variables $(I,\theta)$, the scattering map has the simple form
\begin{equation*}
\mathcal{S}(I,\theta) = \left( I + \varepsilon\frac{\partial \mathcal{L}^*}{\partial\theta}(I,\theta) + \mathcal{O}(\varepsilon^2) , \theta - \varepsilon\frac{\partial \mathcal{L}^*}{\partial I}(I,\theta) + \mathcal{O}(\varepsilon^2)   \right),
\end{equation*}
so up to $\mathcal{O}(\varepsilon^2)$ terms, $\mathcal{S}(I,\theta)$ is the $\varepsilon$ times flow of the \emph{autonomous} Hamiltonian $-\mathcal{L}^*(I,\theta)$.
In particular, the iterates under the scattering map follow the level curves of $\mathcal{L}^*$ up to $\mathcal{O}(\varepsilon^2)$.

\subsection{Crests and {\NH} lines}

We have seen that the function $\tau^*$ plays a central role in our study.
Therefore, we are interested in finding the critical points $\tau^* = \tau^*(I,\varphi,s) $ of function \eqref{eq: SM_critical_point}.
For our concrete case \eqref{eq:our_meln_potential}, $\tau^*$ is a solution of
\begin{equation}
 I A_{1}(I)\sin(\varphi - I\tau^*) + (I-1)A_{2}(I)\sin(\varphi - s - (I-1)\tau^*)=0.\label{eq:tau^*}
\end{equation}
This equation can be viewed from two equivalently geometrical viewpoints.
The first one is that to find $\tau^* = \tau^*(I,\varphi,s)$ satisfying \eqref{eq:tau^*} for any $(I,\varphi,s)\in \left[-I^* , I^* \right]\times\mathbb{T}^2$ is the same as to look for the extrema of $\mathcal{L}$ on  the \emph{{\NH} line}
\begin{equation}
R(I,\varphi,s) = \left\{(I,\varphi - I\tau , s-\tau) : \tau \in \mathbb{R}\right\}.\label{eq:nhim_line}
\end{equation}
\begin{remark}
Since $(\varphi,s)\in\mathbb{T}^2$, $R(I,\varphi,s)$ is a closed line if $I \in \mathbb{Q}$ and it is a dense line on
$\{I\}\times\mathbb{T}^2$ if $I \notin \mathbb{Q}$.
\end{remark}
The other viewpoint is that, fixing $(I,\varphi,s)$, a solution $\tau^*$ of \eqref{eq:tau^*} is equivalent to finding intersections between a {\NH} line \eqref{eq:nhim_line} and a curve defined by
\begin{equation}
IA_1( I ) \sin\varphi + (I - 1)A_2(I)\sin(\varphi - s) = 0.\label{eq:def_crests_2}
\end{equation}
These curves are called \emph{crests}, and in a general way can be defined as follows.

\begin{definition}\label{def:crest}
\cite{Delshams2011} We define by \emph{Crests} $\mathcal{C}(I)$ the curves on $(I,\varphi,s)$, $(\varphi,s)\in \mathbb{T}^2$, such that
\begin{equation}
\frac{\partial \mathcal{L}}{\partial \tau}(I , \varphi - I \tau ,s - \tau)|_{\tau = 0} = 0,\label{eq:def_crests}
\end{equation}
or equivalently,
$$I\frac{\partial \mathcal{L}}{\partial\varphi}(I,\varphi,s) + \frac{\partial\mathcal{L}}{\partial s}(I,\varphi,s)= 0.$$
\end{definition}

As in our case $\mathcal{L}(I , \varphi - I\tau , s - \tau ) = A_1(I)\cos(\varphi - I\tau) + A_2( I ) \cos( \varphi  - s - (I - 1) \tau )$, equation \eqref{eq:def_crests} takes the form
\eqref{eq:def_crests_2}.
Introducing
\begin{equation}
\sigma = \varphi - s, \label{eq:def_sigma}
\end{equation}
equation \eqref{eq:def_crests_2} can be rewritten as
\begin{equation}
 \mu \alpha( I ) \sin\varphi + \sin\sigma =0,\label{eq:crests_equation}
\end{equation}
for $I \neq 1$, where
\begin{equation}
\mu = \frac{a_{1}}{a_{2}}\quad\text{and}\quad \alpha(I) =\frac{I^2\sinh(\frac{\pi}{2}(I-1))}{(I-1)^2\sinh(\frac{\pi I}{2})}.\label{eq:mu_alpha}
\end{equation}

From now on, when we refer to crests $\mathcal{C}(I)$ we mean the set of points $(I,\varphi , \sigma)$ satisfying equation~\eqref{eq:crests_equation}.
See an illustration in Fig.~\ref{fig:qual_plot}.

\begin{remark}
In \cite{Delshams2017} the crests were described on the plane $(\varphi , s )$, whereas now such curves lie on the plane $(\varphi , \sigma)$.
Besides, differently from the cases studied in \cite{Delshams2011,Delshams2017}, the function $\alpha(I)$ is not defined for all $I$.
More precisely, it is not defined for $I = 1$. For this value of $I$, equation \eqref{eq:crests_equation} is not adequate,
and one has to use \eqref{eq:def_crests_2} to check that for $I = 1$ the crests are just two vertical straight lines on the plane $(\varphi,\sigma)$ given by $\varphi = 0$ and $\varphi = \pi$.
\end{remark}
\begin{remark}
For Hamiltonian \eqref{eq:bar_hamil_system} and $r\in (0,1)$, $\alpha_{r}(I)$ is not defined for $I = 1/r$ and is given by
\begin{equation*}
\alpha_r(I) = \frac{I^2\sinh\left(\frac{\pi}{2}(rI - 1)\right)}{(rI-1)^2\sinh\left(\frac{\pi I}{2}\right)}.
\end{equation*}
\label{rem:r_2}
\end{remark}
We are interested in understanding the behavior of these crests because, as we have seen in previous works \cite{Delshams2011,Delshams2017}, their intersection with the {\NH} lines determine the existence and behavior of scattering maps.

From \eqref{eq:crests_equation}, when $\left|\alpha(I)\right| <1/\left|\mu\right|$, $\sigma$ can be written as a function of $\varphi$ for all $\varphi\in\mathbb{T}$ on the crest $\mathcal{C}(I)$.
On the other hand, if $\left|\alpha(I)\right| > 1/\left|\mu\right|$, $\varphi$ can be written as a function of $\sigma$ for all $\sigma\in\mathbb{T}.$
These two conditions give us two kinds of crests: \emph{horizontal} for $\left|\alpha(I)\right| <1/\left|\mu\right|$ and \emph{vertical} for  $\left|\alpha(I)\right| > 1/\left|\mu\right|$.
These names are due to their forms on the plane $(\varphi , \sigma)$.
We consider the same characterization used in \cite{Delshams2017}:
\begin{itemize}
\item For $\left|\alpha(I)\right|\, <\, 1/\left|\mu\right|$, there are two horizontal crests $\sigma = \xi_{\M,\m}(I,\varphi)$
$$\mathcal{C}_{\M,\m}(I)=\{(I,\varphi,\xi_{\M,\m}(I,\varphi)):\varphi\in\mathbb{T}\},$$
\begin{eqnarray}\label{eq:cristas_06}
\xi_{\M}(I,\varphi)&=&-\arcsin(\mu\alpha(I)\sin \varphi )  \quad\quad\mod{2\pi}\\
\xi_{\m}(I,\varphi)&=&\arcsin(\mu\alpha(I)\sin \varphi )+\pi \quad\quad\mod{2\pi}.\nonumber
\end{eqnarray}
\item  For $\left|\alpha(I)\right|\, >\, 1/\left|\mu\right|$, there are two vertical crests  $\varphi = \eta_{\M,\m}(I,\sigma)$
$$\mathcal{C}_{M,m}(I)=\{(I,\eta_{M,m}(I,\sigma),\sigma):\sigma\in\mathbb{T}\},$$
\begin{eqnarray*}
\eta_{M}(I,\sigma)&=&-\arcsin(\sin \sigma/\left(\mu \alpha(I)\right))\quad\quad\mod{2\pi} \label{eq:eta_definition}\\
\eta_{m}(I,\sigma)&=&\arcsin(\sin \sigma/\left(\mu \alpha(I)\right))+\pi\quad\quad\mod{2\pi}.\nonumber
\end{eqnarray*}
\end{itemize}

\begin{remark}
\label{rmk:singular}
$\left|\alpha(I)\right| = 1/\left|\mu\right|$ is a singular or bifurcation case.
In this case, the crests are straight lines and are not differentiable in $\varphi = \pi/2$ and $\varphi = 3\pi/2$.
See Fig.~6 of~\cite{Delshams2017}.
\end{remark}
\begin{remark}
The crest containing the point $(\varphi , \sigma) = (0,0)$ will be denoted by $\mathcal{C}_{\M}(I)$  and the crest containing the point $(\varphi , \sigma) = (\pi , \pi)$ by $\mathcal{C}_{\m}(I)$.
\end{remark}

Note that the function $\left|\alpha(I)\right|$ is not bounded, indeed
\begin{equation*}
\lim_{I\rightarrow 1}\left|\alpha(I)\right| = +\infty.
\end{equation*}
This implies that for any $\mu$ there exists a neighborhood $U$ of $I = 1$ such that for all $I\in U$ the crests are vertical.
On the other hand, since $\alpha( 0 ) = 0 $ there exists a neighborhood $V$ of $I = 0$ such that for all $I\in V$ the crests are horizontal.
We notice here a remarkable difference with the Hamiltonians studied in \cite{Delshams2011,Delshams2017}, where, for $\left|\mu\right| \leq 0.97$, all the crests are horizontal for all $I$.

Now take a look at the properties of the function $\alpha(I)$ introduced in \eqref{eq:mu_alpha} to describe under which conditions in $\mu$ the crests are horizontal or vertical.
First of all, observe that for $I\neq 1$, $\alpha(I)$ is smooth and $\alpha'(I)\neq 0, $ and for $I = 1$ $\alpha(I)$ is not bounded, indeed it has a vertical asymptote
\begin{equation*}
\lim_{I\rightarrow 1^-} \alpha(I) = -\infty \quad\quad \text{ and }\quad\quad\lim_{I\rightarrow 1^+}\alpha(I) = +\infty.
\end{equation*}

Given a $\mu\neq0$, since $\alpha(0) = 0$, there exists a unique $I_c \in (0,1)$ such that $\left|\alpha(I)\right| = 1/\left|\mu\right|$.
So, the crests are horizontal for $I\in\left[ 0 , I_c\right)$ and vertical for $I\in(I_c , 1)$.

Others important limits are
\begin{equation*}
\lim_{I\rightarrow -\infty}\alpha(I) = \exp(\pi/2) \quad\quad \text{ and }\quad\quad \lim_{I\rightarrow + \infty } \alpha(I) = \exp(-\pi/2).
\end{equation*}
The first limit implies that $\left|\alpha(I)\right| < \exp( \pi/2)$ for $I\in\left(-\infty , 0\right)$.
Thus, if $\exp(\pi/2)\leq 1/\left|\mu\right|$ the crests are horizontal for $I\in(-\infty , 0)$.
Otherwise, if $1/\left|\mu\right| < \exp(\pi/2 )$, there exists a unique $I_{\text{l}}\in(-\infty , 0)$ such that $\left|\alpha(I)\right| = 1/\left|\mu\right|$ and the crests are vertical for $I\in(-\infty , I_{\text{l}})$ and horizontal for $I\in\left(I_{\text{l}} ,0\right)$.

The second limit implies that $\left|\alpha(I)\right| > \exp(-\pi/2)$ for $I\in(1 , +\infty)$.
Then, if $\exp( -\pi/2 )\geq 1/\left|\mu\right|$, the crests are vertical for $I\in\left[1 , +\infty\right)$.
if $\exp( -\pi/2 )< 1/\left|\mu\right|$, there exists a unique $I_{\text{r}}\in(1 , + \infty)$, such that the crests are vertical for any $I$ in $\left[1 , I_{\text{r}}\right)$ and horizontal for $I\in(I_\text{r} , +\infty)$.

Summarizing, for $1/\left|\mu\right| \geq \exp(\pi/2)$, crests are horizontal for $I\in\left(-\infty , I_{\text{c}}\right)\cup(I_{\text{r}} , +\infty)$ and vertical for $I\in\left(I_{\text{c}},I_{\text{r}}\right)$.
For $\exp(-\pi/2) < 1/\left|\mu\right| < \exp( \pi/2)$, crests are horizontal for $I\in(I_{\text{l}}, I_{\text{c}})\cup(I_{\text{r}} , +\infty)$ and vertical for $I\in(-\infty , I_{\text{l}})\cup(I_{\text{c}} , I_{\text{r}})$.
Finally, if $ 1/\left|\mu\right| < \exp( -\pi/2)$, crests are horizontal for $I\in(I_{\text{l}} , I_{\text{c}})$ and vertical for $I\in(-\infty , I_{\text{l}})\cup(I_{\text{c}, + \infty})$.

\begin{remark}
For $r\in (0,1)$, $\alpha_r(I)$ is not bounded on a neighbourhood of the resonance $I = 1/r$, i.e., $\lim_{I \rightarrow 1/r^{-}} \alpha_r(I) = -\infty $ and $\lim_{I\rightarrow 1/r^{+}}\alpha_r(I) = +\infty$.
The same behavior takes place for $r = 1$ and close to $I = 1$.
On the other hand, for $I\rightarrow \pm \infty$, $\alpha_r(I)$ has the same behavior as in the case for $r = 0$, $\lim_{I \rightarrow \pm\infty}\alpha_r(I) = 0$.
This implies that for any value of $\mu$, for $I$ close enough to $I = 1/r$ the crests are vertical, and for $\left|I\right| $ large enough the crests are horizontal. \label{rem:r_3}
\end{remark}

\paragraph{Example} To illustrate this discussion, we present a concrete example.
Taking $\mu = 0.5$,  we have $\exp(-\pi/2)< 1/\mu = 2<\exp(\pi/2)$.
In this case we have $I_{\text{l}}\approx -1.807$, $I_{\text{c}} \approx 0.701$ and $I_{\text{r}} \approx 1.367$.
The crests are horizontal in $(-1.807 , 0.701)\cup(1,367 , + \infty)$ and vertical in $(-\infty , -1.807)\cup( 0.701 , 1.367 )$.
We emphasize that this scenario is very different from the case in \cite{Delshams2017}.
There, for $\mu = 0.5$ the crests are horizontal for all $I$.
\\

Now, we are going to focus on the transversality of the intersection between {\NH} lines $R(I,\varphi , s)$ and crests $\mathcal{C}(I)$.
On the plane $(\varphi , \sigma)$ the {\NH} lines can be written as
\begin{equation}
R_{I}(\varphi, \sigma) = \{( \varphi - I\tau , \sigma -(I-1)\tau) , \tau\in\mathbb{R}\},\label{eq:nhim_line_sigma}
\end{equation}
so that its slope is $(I-1)/I$ in such plane.
Therefore, there exists an intersection between {\NH} lines and crests that is not transversal if, and only if, there exists a tangent vector of $\mathcal{C}(I)$ at a point that is parallel to $(I,I-1)$, or, using the parameterizations,
\begin{equation*}
\frac{\partial\xi}{\partial \varphi}(I,\varphi) = \frac{I-1}{I} \quad\quad\text{ or } \quad\quad \frac{\partial \eta}{\partial\sigma}(I,\sigma) = \frac{I}{I-1}.
\end{equation*}

Considering a \emph{horizontal} parameterization of $\mathcal{C}(I)$, the tangency condition is equivalent to
\begin{equation*}
\frac{\pm\alpha(I)\mu\cos\varphi}{\sqrt{1-\mu^2\alpha^2(I)\sin^2\varphi}} = \frac{I-1}{I}.
\end{equation*}
Therefore, there exists a $\varphi$ satisfying the above condition if, and only if,
\begin{equation*}
\left|\beta(I)\right| \geq \frac{1}{\left|\mu\right|},\quad\text{ where }\quad\beta(I) = \frac{I\alpha(I)}{I-1}\label{eq:condition_tan_hor}
\end{equation*}
and $\varphi$ takes the form
\begin{equation*}
\varphi =\pm \arctan\left(\sqrt{\frac{\beta(I)^2-(1/\mu)^2}{(1/\mu)^2-\alpha(I)^2}}\right).
\end{equation*}
In an analogous way, for a \emph{vertical} parameterization $\eta(I,\sigma)$, there are tangencies if, and only if,
\begin{equation*}
\left|\beta(I)\right|\leq \frac{1}{\left|\mu\right|} \quad\quad \text{ with }\quad\quad \sigma = \pm \arctan\left(\left|\frac{I-1}{I}\right|\sqrt{\frac{(1/\mu)^2 - \beta(I)^2}{\alpha(I)^2-(1/\mu)^2}}\right). \label{eq:condition_tan_ver}
\end{equation*}

\begin{remark}
Observe that in both cases, horizontal and vertical crests, there are tangencies if, and only if,
\begin{equation*}
\left(\left|\alpha(I)\right| - \frac{1}{\left|\mu\right|}\right)\left( \left|\beta(I)\right| - \frac{1}{\left|\mu\right|} \right) < 0.
\end{equation*}
\end{remark}

The function $\left|\beta(I)\right|$ is smooth in $\mathbb{R}\setminus\left\{1\right\}$ and $d\left|\beta(I)\right|/dI = 0$ only for $I=0$.
Besides, we have
\begin{equation*}
\lim_{I\rightarrow 1}\left|\beta(I)\right| = +\infty, \quad\quad \lim_{I\rightarrow - \infty}\left|\beta(I)\right| = \exp(\pi/2)\quad\text{ and }\quad \lim_{I\rightarrow +\infty}\left|\beta(I)\right| = \exp( -\pi/2).
\end{equation*}
Therefore, there are three possibilities:
\begin{itemize}
\item for $1/\left|\mu\right| \geq \exp(\pi/2)$, there exist $I_0\in(1/2 , 1)$ and $I_+\in(1,+\infty)$ such that $I_0$ and $I_+$ are solutions of $\left|\beta(I)\right| - 1/\left|\mu\right|=0$.
Besides, $\left|\beta(I)\right|<1/\left|\mu\right|$ for $I\in(-\infty , I_0)\cup(I_+,+\infty)$ and $\left|\beta(I)\right|> 1/\left|\mu\right|$ for $I\in(I_0 , 1)\cup(1 , I_+)$.
\item for $\exp(-\pi/2 )< 1/\left|\mu\right| < \exp(\pi/2)$, there exist $I_-\in(-\infty , 0)$, $I_0\in(0 , 1)$ and $I_+\in(1,+\infty)$ such that $I_-$, $I_0$ and $I_+$ are solutions of $\left|\beta(I)\right| - 1/\left|\mu\right|=0$.
Besides, $\left|\beta(I)\right|<1/\left|\mu\right|$ for $I\in(I_- , I_0)\cup(I_+,+\infty)$ and $\left|\beta(I)\right|> 1/\left|\mu\right|$ for $I\in(-\infty,I_-)\cup(I_0 , 1)\cup(1 , I_+)$.
\item For $ 1/\left|\mu\right| \leq \exp(-\pi/2)$, there exist $I_-\in(-\infty , 0)$ and $I_0\in(0 , 1/2)$ such that $I_-$ and $I_0$ are solutions of $\left|\beta(I)\right| - 1/\left|\mu\right|=0$.
Besides, $\left|\beta(I)\right|<1/\left|\mu\right|$ for $I\in(I_- , I_0)$ and $\left|\beta(I)\right|> 1/\left|\mu\right|$ for $I\in(-\infty,I_-)\cup(I_0 , 1)\cup(1 , \infty)$.
\end{itemize}

Putting together this description of $\left|\beta(I)\right|$ with the study about vertical and horizontal crests and adding that
\begin{eqnarray*}
\left|\beta(I)\right| < \left|\alpha(I)\right| &  \forall I\in(-\infty , 0)\cup(0,1/2);\\
\left|\beta(I)\right| > \left|\alpha(I)\right| &  \forall I\in(1/2 , 1)\cup(1,+\infty);\\
\left|\beta(0)\right| = \left|\alpha(0)\right| = 0&  \left|\beta(1/2)\right| = \left|\alpha(1/2)\right| = 1
\end{eqnarray*}
we can state the proposition below.

\begin{proposition}\label{prop:cristas_behave}
Consider the two crests $\mathcal{C}(I)$ defined by \eqref{eq:crests_equation} and the {\NH} line $R_{I}(\varphi,\sigma)$ defined in \eqref{eq:nhim_line} for Hamiltonian~\eqref{eq:new_hamil_system}.
\begin{itemize}
\item For $\left|\mu\right| \leq \exp(-\pi/2)$, there exist $I_{\text{b}} < I_{\text{a}}< I_{\text{A}} <I_{\text{B}}$ such that
\begin{itemize}
\item for $I<I_{\text{b}}$ or  $I_{\text{B}}<I$, $\mathcal{C}(I)$ are horizontal and intersect transversally any $R_{I}(\varphi , \sigma)$;
\item for $I_{\text{b}} \leq I < I_{\text{a}}$ or $I_{\text{A}} < I\leq I_{\text{B}}$, the crests $\mathcal{C}(I)$ are horizontal, but now, there exist tangencies between $\mathcal{C}(I)$ and two {\NH} lines $R_{I}(\varphi , \sigma)$;
\item for $I_{\text{a}} < I < I_{\text{A}}$, the crests $\mathcal{C}(I)$ are vertical and intersect transversally any $R_{I}(\varphi , \sigma)$.
\end{itemize}
\item For $\exp(-\pi/2) < \left| \mu \right| < \exp( \pi/2)$ there exist $I_{\text{b}} < I_{\text{a}} < I _{\text{c}} \leq I_{\text{C}}< I_{\text{A}} <I_{\text{B}}$ such that
\begin{itemize}
\item for $I < I_{\text{b}}$ or $I_{\text{C}} < I < I_{\text{A}}$, $\mathcal{C}(I)$ are vertical and intersect transversally any $R_{I}(\varphi , \sigma)$;
\item for $I_{\text{b}}\leq I < I_{\text{a}}$, the crests $\mathcal{C}(I)$ are vertical and there exist tangencies between $\mathcal{C}(I)$ and two {\NH} lines $R_{I}(\varphi , \sigma)$;
\item for $I_{\text{a}} < I < I_{\text{c}}$ or $I_{\text{B}} < I$, $\mathcal{C}(I)$ are horizontal and  intersect transversally any $R_{I}(\varphi , \sigma)$;
\item for $I_{\text{A}} \leq I \leq I_{\text{B}}$, the crests $\mathcal{C}(I)$ are horizontal and there exist tangencies between $\mathcal{C}(I)$ and two {\NH} lines $R_{I}(\varphi , \sigma)$;
\item  for $I_{\text{c}} \leq I \leq I_{\text{C}}$,  if $I_{\text{c}} <1/2$, the crests $\mathcal{C}(I)$ are vertical and there
exist tangencies between $\mathcal{C}(I)$ and $R_{I}(\varphi,\sigma)$.
If $I_{\text{c}} = 1/2$, from the properties of $\alpha(I)$ and $\beta(I)$ this interval is just one point.
If $I_{\text{c}}> 1/2$, the crests $\mathcal{C}(I)$ are horizontal and there exist tangencies.
\end{itemize}
\item For $\left|\mu\right| \geq \exp( \pi/2 )$ there exist $I_{\text{b}} < I_{\text{a}} < I_{\text{A}} < I_{\text{B}}$ such that
\begin{itemize}
\item for $I < I_{\text{b}}$ or $I_{\text{B}} < I$, $\mathcal{C}(I)$ are vertical and intersect transversally any $R_{I}(\varphi , \sigma)$;
\item for $ I_{\text{b}} \leq I < I_{\text{a}}$ or $ I_{\text{A}} < I \leq I_{\text{B}}$, the crests $\mathcal{C}(I)$ are vertical and there exist tangencies between $\mathcal{C}(I)$ and two {\NH} lines $R_{I}(\varphi , \sigma)$;
\item for $I_{\text{a}} < I < I_{\text{A}}$, the crests $\mathcal{C}(I)$ are horizontal and intersect transversally any $R_{I}(\varphi , \sigma)$.
\end{itemize}
\end{itemize}
\end{proposition}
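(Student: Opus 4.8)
The plan is to reduce Proposition~\ref{prop:cristas_behave} to a careful bookkeeping of the two scalar functions $\alpha(I)$ and $\beta(I)=I\alpha(I)/(I-1)$ together with the transversality criterion already derived above, namely that a tangency between a crest $\mathcal{C}(I)$ and some {\NH} line $R_I(\varphi,\sigma)$ exists if and only if $\bigl(|\alpha(I)|-1/|\mu|\bigr)\bigl(|\beta(I)|-1/|\mu|\bigr)<0$, and that the crests are horizontal exactly when $|\alpha(I)|<1/|\mu|$ and vertical when $|\alpha(I)|>1/|\mu|$. So everything is governed by the position of the horizontal line of height $1/|\mu|$ relative to the graphs of $|\alpha|$ and $|\beta|$. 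First I would collect the already-established analytic facts about these two functions: $\alpha$ is smooth on $\mathbb{R}\setminus\{1\}$ with $\alpha(0)=0$, $\alpha'\neq0$ away from $I=1$, vertical asymptote $\alpha(1^\mp)=\mp\infty$, and horizontal asymptotes $\exp(\pi/2)$ at $-\infty$ and $\exp(-\pi/2)$ at $+\infty$; likewise $|\beta|$ is smooth on $\mathbb{R}\setminus\{1\}$, has a unique critical point at $I=0$ where it vanishes, blows up at $I=1$, and has the same two horizontal asymptotes $\exp(\pm\pi/2)$ at $\mp\infty$. Crucially I would also record the comparison inequalities stated just before the proposition: $|\beta|<|\alpha|$ on $(-\infty,0)\cup(0,1/2)$, $|\beta|>|\alpha|$ on $(1/2,1)\cup(1,+\infty)$, with equality (both $=0$) at $I=0$ and (both $=1$) at $I=1/2$.

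Next I would split into the three regimes according to where $1/|\mu|$ sits relative to the asymptotic values $\exp(\pm\pi/2)$, since these determine how many times each graph crosses the level $1/|\mu|$; this matches the three bullet cases in the proposition. In each regime I would: (i) use the monotonicity/asymptote data for $|\alpha|$ to locate its crossing points with $1/|\mu|$ — these are exactly the points $I_{\text{b}},I_{\text{a}},I_{\text{A}},I_{\text{B}}$ where the crest type switches between horizontal and vertical (renaming the $I_{\text l},I_{\text c},I_{\text r}$ of the earlier crest discussion), and (ii) use the monotonicity/asymptote data for $|\beta|$ together with its single critical point at $0$ to locate its crossings, the $I_-,I_0,I_+$ of the earlier $\beta$-discussion. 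Then the sign of $\bigl(|\alpha|-1/|\mu|\bigr)\bigl(|\beta|-1/|\mu|\bigr)$ on each subinterval — which by the comparison inequalities alternates in a predictable pattern, since on each side of $I=1/2$ one of $|\alpha|,|\beta|$ dominates the other — tells us on which subintervals tangencies occur and on which the intersection is always transversal. Ordering the crossing points of $|\alpha|$ and $|\beta|$ along the $I$-axis (this is where the comparison inequalities $|\beta|\lessgtr|\alpha|$ and the common zero at $0$ and common value $1$ at $1/2$ do the work of interleaving the two sets of points) yields exactly the chains $I_{\text b}<I_{\text a}<I_{\text A}<I_{\text B}$ or $I_{\text b}<I_{\text a}<I_{\text c}\le I_{\text C}<I_{\text A}<I_{\text B}$ claimed, and reading off the crest type and transversality status on each resulting subinterval gives the three itemized descriptions. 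The delicate middle regime $\exp(-\pi/2)<|\mu|<\exp(\pi/2)$ is where I would be most careful: here $|\alpha|$ crosses $1/|\mu|$ three times and $|\beta|$ three times, and one must check that the $|\beta|$-crossing nearest $I=1/2$ (call it $I_{\text c}$ vs.\ $I_{\text C}$) lies on the correct side; this is precisely the place where the proposition hedges with "if $I_{\text c}<1/2$ \dots\ if $I_{\text c}=1/2$ \dots\ if $I_{\text c}>1/2$", and the equality case collapses to the single point $I=1/2$ because there $|\alpha|=|\beta|=1$, so the product vanishes identically there. The case $r\in(0,1)$, if needed, follows verbatim with $\alpha$ replaced by $\alpha_r$ of Remark~\ref{rem:r_2}, using the analogous limits recorded in Remark~\ref{rem:r_3}.

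The main obstacle I anticipate is not any single estimate but the combinatorial bookkeeping of interleaving the zero set and level sets of $|\alpha|$ and $|\beta|$ correctly in the intermediate $\mu$-range, and in particular verifying that no additional crossings are hiding — this rests entirely on the global monotonicity statements "$\alpha'\neq0$ off $I=1$" and "$d|\beta|/dI=0$ only at $I=0$", which cap the number of crossings of each graph with any horizontal line at most two per connected component of $\mathbb{R}\setminus\{1\}$. Once those monotonicity facts are in hand (they are asserted in the text above, so I may assume them), the proof is a finite case check with no hard analysis; the only genuinely subtle point is the coincidence $|\alpha(1/2)|=|\beta(1/2)|=1$, which forces the boundary between the "vertical with tangencies" and "horizontal with tangencies" sub-behaviors in the middle regime to pass through $I=1/2$ and explains the three-way split in that bullet. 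I would present the argument as: recall the properties of $\alpha$ and $\beta$; fix the regime for $1/|\mu|$; locate and order the crossing points; tabulate crest type and the sign of the tangency product on each subinterval; conclude.
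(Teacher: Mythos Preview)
Your approach is exactly the paper's: the proposition is stated as a summary of the analysis of $|\alpha|$, $|\beta|$, their asymptotes, monotonicity, the tangency criterion $(|\alpha|-1/|\mu|)(|\beta|-1/|\mu|)<0$, and the comparison $|\beta|\lessgtr|\alpha|$ across $I=1/2$, all of which appear in the text immediately preceding the proposition; there is no separate proof, and your plan simply reconstructs that discussion.

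One small correction to your labeling: the points $I_{\text b},I_{\text a},I_{\text A},I_{\text B}$ in the proposition are \emph{not} all crossings of $|\alpha|$ with $1/|\mu|$. In each regime the chain interleaves $\alpha$-crossings (where the crest type switches) with $\beta$-crossings (where the tangency status switches but the crest type does not); for instance in the first bullet $I_{\text b},I_{\text B}$ are the $\beta$-crossings $I_0,I_+$ and $I_{\text a},I_{\text A}$ are the $\alpha$-crossings $I_{\text c},I_{\text r}$. Your step ``ordering the crossing points of $|\alpha|$ and $|\beta|$'' is the right one, just be aware the final labels mix the two families.
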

\begin{remark}
Note that we are not considering the singular case $\left|\alpha(I)\right| = 1/\left|\mu\right|$ described in Remark~\ref{rmk:singular}.
\end{remark}

\paragraph{Example} Again, to illustrate this proposition, we take the case with $\mu = 0.5$, see Fig.~\ref{fig:alpha_and_beta}.
In this case, we have $\left|\beta(I)\right| = 1/\mu$ for $I \approx -2.942,\, 0.595,\, 1.85$
and
\begin{itemize}
\item for $I \in (-\infty , -2.942)\cup \left(0.701 , 1\right)\cup(1 , 1.367)\Rightarrow \left\{\begin{matrix}
 \left|\alpha(I)\right| > 1/\left|\mu\right|& \Rightarrow  & \text{vertical crests} \\
\left|\beta(I)\right| > 1/\left|\mu\right| &  \Rightarrow & \text{no tangencies}
\end{matrix}\right. $

\item for $I \in \left[ -2.942 , -1.807 \right) \Rightarrow \left\{\begin{matrix}
 \left|\alpha(I)\right| > 1/\left|\mu\right|& \Rightarrow  & \text{vertical crests} \\
\left|\beta(I)\right| \leq 1/\left|\mu\right| & \Rightarrow & \text{tangencies}
\end{matrix}\right.$

\item for $I \in (-1.807 , 0.595 )\cup( 1.85 ,+\infty)\Rightarrow \left\{\begin{matrix}
 \left|\alpha(I)\right| < 1/\left|\mu\right|& \Rightarrow  & \text{horizontal crests} \\
\left|\beta(I)\right| < 1/\left|\mu\right| & \Rightarrow & \text{no tangencies}
\end{matrix}\right.$

\item for $I \in \left[0.595 , 0.701 \right)\cup \left( 1.367 , 1.85\right]\Rightarrow \left\{\begin{matrix}
 \left|\alpha(I)\right| < 1/\left|\mu\right|& \Rightarrow  & \text{horizontal crests} \\
\left|\beta(I)\right| \geq 1/\left|\mu\right| & \Rightarrow & \text{tangencies}
\end{matrix}\right.$
\end{itemize}

Once more, we compare with the Hamiltonian~\eqref{eq:old_hamil_system} studied in \cite{Delshams2017}.
For Hamiltonian~\eqref{eq:old_hamil_system} and $\mu = 0.5$ there is no tangency, but for Hamiltonian~\eqref{eq:new_hamil_system}
we can find tangencies for horizontal and vertical crests.
Indeed, for Hamiltonian~\eqref{eq:old_hamil_system} and any $0<\left|\mu\right| < 0.625$ there is no tangency,
whereas for any $\mu\neq 0$ there are tangencies for Hamiltonian~\eqref{eq:new_hamil_system}.

\begin{figure}[h]
\centering
\subfigure[$\left|\alpha(I)\right|$ and $\left|\beta(I)\right|$:$\mu = 0.5$, $I_b \approx -2.942$, $I_a \approx -1.807$, $I_c \approx 0.595$, $I_C \approx 0.701$, $I_A \approx 1.367$ and $I_B \approx 1.85$]{\includegraphics[scale=0.33]{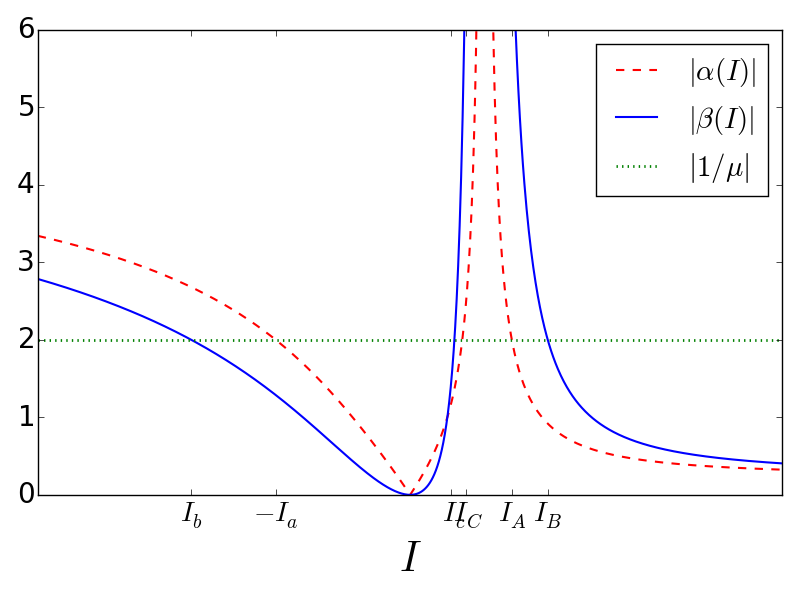}\label{fig:alpha_and_beta}}
\qquad
\subfigure[$\left|\alpha_r(I)\right|$ and $\left|\beta_r(I)\right|$: $\mu = 0.5$ and $r = 0.5$.]{\includegraphics[scale=0.33]{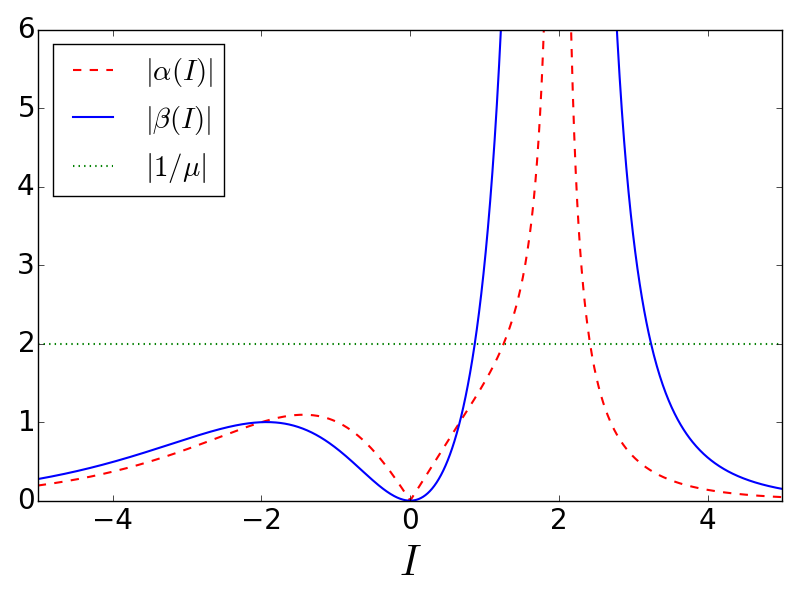}\label{fig:alpha_and_beta_r_0.5}}
\qquad
\caption{$\left|\alpha(I)\right|$ and $\left|\beta(I)\right|$ : Behavior of the crests and tangencies.}
\end{figure}

\begin{remark}
For $r\in (0 ,1 ),$ $\beta_r(I)$ is defined by $\beta_r(I) = I\alpha_r(I)/(rI - 1)$.
In this case, $\lim_{I\rightarrow 1/r}\left|\beta_r(I)\right| = +\infty$ and $\lim_{I \rightarrow \pm \infty}\left|\beta_r(I)\right| = 0$.
In Fig.~\ref{fig:alpha_and_beta_r_0.5}, a comparison between the functions $\alpha_r(I)$, $\beta_r(I)$ and the straight line $1/\left|\mu\right|$ for $r=1/2$ is displayed.\label{rem:r_4}
\end{remark}

For each crest, where it is well defined, there exists, at least, a  value $\tau^*$ such that
\begin{equation*}
(\varphi - I\tau^* , \sigma - (I - 1)\tau^*) = (\varphi - I\tau^* , \xi(I , \varphi - I\tau^*))\text{ or } (\eta(I ,\sigma - (I-1)\tau^* ) , \sigma - (I-1)\tau^*),
\end{equation*}
which means that $R_{I}(\varphi , \sigma)\cap \mathcal{C}(I)\neq \emptyset$.
This intersection is intrinsically associated to a homoclinic orbit to the NHIM.
To make a choice about how to take such $\tau^*$ is to choose in which homoclinic manifold $\Gamma$
the homoclinic points $\tilde{z}^*$ lie.
Even more, it is to choose what scattering map we are going to use.

\subsection{Construction of scattering maps}

We have now several goals. First, to explain, given $(I,\theta)$, how to find the intersection between one of
the {\NH} lines and one of the two crests, and consequently, to define the function $\tau^*$.
Second, to show how each crest can give rise to many scattering maps.
And third, to explain the different scattering maps or combinations of them that can be defined.

Let us first study the intersection between {\NH} lines and crests.
From the definition of the function $\tau^* = \tau^*(I,\varphi,s)$ in equation \eqref{eq:tau^*} and the definition of a {\NH} line $R(I,\varphi,s)$ in \eqref{eq:nhim_line} and a crest $\mathcal{C}(I)$ in Definition~\ref{def:crest}, it turns out that
\begin{equation*}
R(I,\varphi,s)\cap\mathcal{C}(I) = \left\{\left(I,\varphi- I\tau^*(I,\varphi,s),s - \tau^*(I,\varphi,s) \right)\right\}.
\end{equation*}
Moreover, from the equation satisfied by the function $\tau^*$, one can get (see Eq.~(3.12) in \cite{Delshams2017}) that for any $\gamma$
\begin{equation*}
\tau^*(I,\varphi - I\gamma, s -\gamma) = \tau^*(I,\varphi,s) - \gamma.
\end{equation*}
In particular, for the change \eqref{eq:def_sigma} $s = \varphi - \sigma$ and $\gamma = \varphi - \sigma$ one gets
\begin{equation}\label{eq:tau_sigma}
\tau^*(I,\varphi,\varphi-\sigma) = \tau^*(I,\theta) + \varphi - \sigma,
\end{equation}
where $\theta = \varphi - Is = (1-I)\varphi + I\sigma$.
In the variables $(I,\varphi,\sigma)$, taking into account the expression \eqref{eq:nhim_line_sigma} for the {\NH} lines $R(I,\varphi,\varphi - \sigma)$ and again equation \eqref{eq:tau^*} satisfied by the $\tau^*(I,\varphi,s)$, we have that
\begin{align*}
R(I,\varphi, \varphi-\sigma)\cap\mathcal{C}(I) &= \left\{\left(I,\varphi - I\tau^*(I,\varphi,\varphi-\sigma) , \sigma - (I-1)\tau^*(I,\varphi,\varphi -\sigma )\right)\right\}\\
&=\left\{\left(I,\theta - I\tau^*(I,\theta),\theta - (I-1)\tau^*(I,\theta)\right)\right\},
\end{align*}
where \eqref{eq:tau_sigma} has been used, and $\theta = (1-I)\varphi + I\sigma$.

From a geometrical point of view, to find an intersection between a {\NH} line and a crest, one throws from a point $(\theta , \theta)$ on the plane $(\varphi,\sigma)$ a straight line with slope $(I-1)/I$, until it touches the crest $\mathcal{C}(I)$. The function $\tau^*(I,\theta)$ is the time spent to go from a point $(\theta,\theta)$ in the diagonal $\sigma = \varphi$ up to $\mathcal{C}(I)$ with a velocity vector $\mathbf{v} = -(I , I-1)$, see Fig.~\ref{fig:qual_plot}.

\begin{figure}[h]
\centering
\includegraphics[scale=0.4]{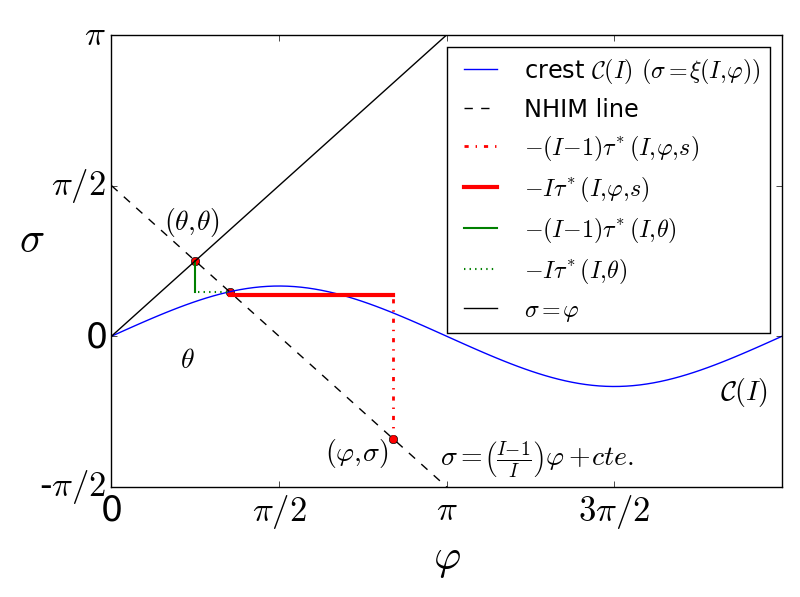}
\caption{Finding $\tau^*(I,\theta)$ using the straight line $\sigma = \varphi$.\label{fig:qual_plot}}
\end{figure}
One has to decide the direction for $\tau^*$ using the idea explained above.
For example, if we are on a point on the straight line $\sigma = \varphi$, we have to decide if we go up or go down along the {\NH} line, i.e., to look for a negative or a positive $\tau^*(I,\theta)$ (to look at the past or the future).
In both cases we are going to detect an intersection with the desired crest, but, in general, different choices give rise to different scattering maps, because we are looking for different homoclinic invariant manifolds $\Gamma$.

To show another difference between scattering maps from the choice of $\tau^*$ we begin by introducing each kind of scattering map.
The first one is inspired in \cite{Delshams2011} and \cite{Delshams2017} for $\left|\mu\right| < 0.97$.
In these cited cases all scattering maps studied were associated to one of the horizontal crests like in \eqref{eq:cristas_06}.
In the same way, we can separate completely the scattering maps associated to the horizontal crests and the scattering maps associated to the vertical crests.
Notice that the scattering maps associated to horizontal crests are defined only for values of $I$ satisfying $\left|\alpha(I)\right|< 1/\left|\mu\right|$ whereas the scattering maps associated to the vertical crests are defined only for values of $I$ satisfying $\left|\alpha(I)\right| > 1/\left|\mu\right|$.

As noted previously, crests are vertical in a neighborhood of $I = 1$ for any value of $\mu$.
Therefore, there is no scattering map associated to a horizontal crest close to $I=1$.
Analogously, since $\left|\alpha(0)\right| = 0$, crests are horizontal in a neighborhood of $I = 0$ for any value of $\mu$ and, therefore, there is no scattering map associated to a vertical crest close to $I = 0$.
This implies that these ``horizontal'' or ``vertical'' scattering maps are just locally defined, in other words, they are not defined on the whole plane $(\theta , I)$.
This motivates to define \emph{global scattering maps}.
Global scattering maps are important because they describe the outer dynamics for large intervals of $I$ and are defined as follows
\begin{definition}
A scattering map $\mathcal{S}(I,\theta)$ is called  a \emph{global scattering map} if it is defined on all $\theta\in\mathbb{T}$ for any fixed $I$.
\end{definition}

Note that $\mathcal{S}(I,\theta)$ is a global scattering map as long as $\tau^*(I , \theta)$ is a global function, i.e., defined on all $\theta\in\mathbb{T}$ for any fixed $I$.
If $\tau^*(I,\theta)$ is smoothly defined, the same will happen to $\mathcal{S}(I,\theta)$.
Tangencies between \NH\, lines and crests, as well as discontinuities in their intersections give rise to non-smooth scattering maps.

\begin{remark}
For instance, in the paper~\cite{Delshams2017} devoted to the Hamiltonian~\eqref{eq:old_hamil_system},
it was proven that for $0<\mu=a_1/a_2<0.625$, there exist two different global scattering maps.
Let us add that for $0.625\leq \mu < 0.97$, due to the existence of tangencies between the {\NH} lines and the crests, there appear two or six scattering maps.
Such \emph{multiple} scattering maps are indeed piecewise smooth global scattering maps, see Figs.~9--11 of~\cite{Delshams2017}.
Their discontinuities lie along the \emph{tangency locus} and were avoided there to construct diffusion paths,
just for the sake of simplicity.
\end{remark}

For Hamiltonian \eqref{eq:new_hamil_system}, to extend scattering maps which are in principle only locally defined we have now two options: to combine a scattering map associated to a horizontal crest with a scattering map associated to a vertical crest or to extend the previously called ``horizontal'' or ``vertical'' scattering maps.
Although the first option may provide a global scattering map, they may appear complex discontinuity sets which give rise to a complicated phase space.

The second option is to apply the same idea used in \cite{Delshams2017} when we defined the scattering map ``with holes''.
When $\left|\alpha(I)\right| >1/\left|\mu\right|$, the horizontal crests are no longer defined for all $\varphi\in\mathbb{T}$, indeed, they become vertical crests defined for all $\sigma\in\mathbb{T}$. Nevertheless, the vertical crests are formed by pieces of horizontal crests.
This implies that even for these values of $I$ we can use $\xi$ given in \eqref{eq:cristas_06} to parameterize some intersections between $R(I,\varphi ,\sigma)$ and $\mathcal{C}(I)$.
As we can see in Fig.~\ref{fig:hor_vert_prox}, the vertical and horizontal crest $\mathcal{C}_{\M}$ are very close in a neighbourhood of $\varphi = 0$.
When we have a bifurcation from horizontal to vertical crests (or vice versa), it is natural just to change the parameterization from $\xi_{\M}$ to $\eta_{\M}$ for these values of $\varphi$.
With this choice the orbits of the scattering map are continuous for $\theta$ close to $0$ or $2\pi$.
The same happens with $\xi_{\m}$ and $\eta_{\m}$ for values of $\varphi$ close to $\pi$.
Scattering maps associated to horizontal crests for values of $I$ satisfying $\left|\mu\alpha(I)\right| < 1$ are defined for all $\varphi\in\mathbb{T}$.
The extension of them to values of $I$  for $\varphi\in\mathbb{T}$ such that $\left|\mu\alpha(I)\sin\varphi\right| < 1$ are called \emph{extended scattering maps}.
\begin{definition}
A scattering map $\mathcal{S}(I,\theta)$ is called an \emph{extended scattering map} if it is associated to horizontal crests for which $\left|\mu\alpha(I)\right|<1$, and is continuously extended to the pieces of the vertical crests where they behave as horizontal crests, that is, for the values $\varphi$ such that $\left|\mu\alpha(I)\sin\varphi\right| < 1$.
\end{definition}
Since we have already seen in Proposition~\ref{prop:cristas_behave} that there exist tangencies between {\NH} lines and crests for any value of $\mu$, there are no global scattering maps for Hamiltonian \eqref{eq:new_hamil_system}. However, there exist
extended scattering maps with a domain large enough to provide diffusion paths.
\begin{figure}[h]
\centering
\subfigure[A piece of $\xi_{\M}(I,\varphi)$ for $I = 0.68$.]{\includegraphics[scale=0.27]{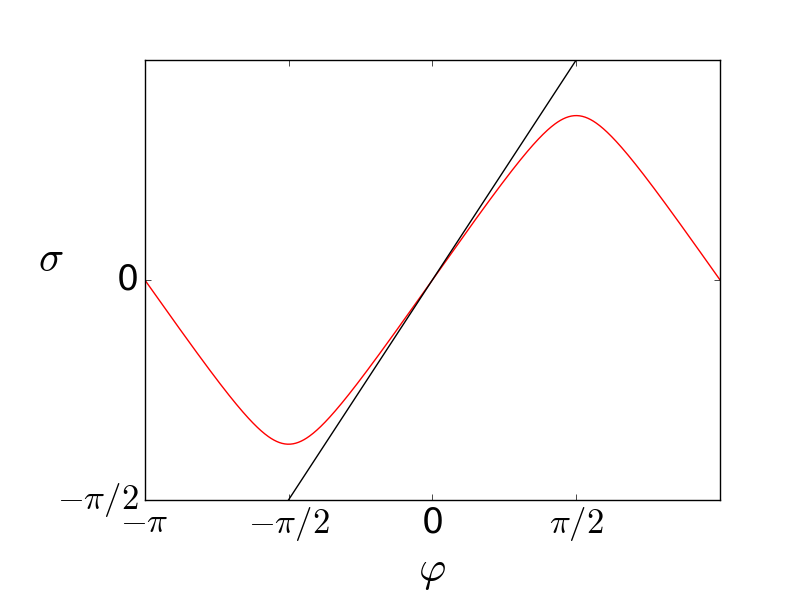}\label{fig:hor_crest_part}}
\qquad
\subfigure[A piece of $\eta_{\M}(I,\sigma)$ for $I = 0.72$.]{\includegraphics[scale=0.27]{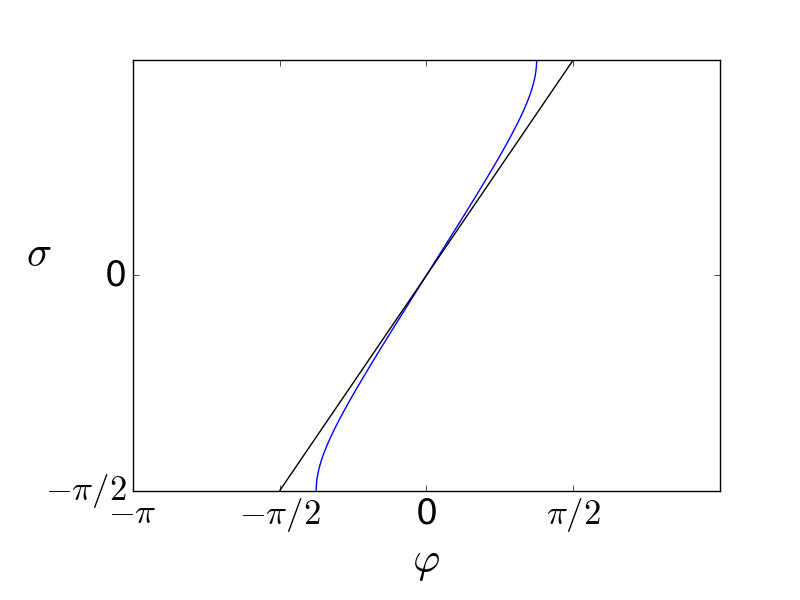}\label{fig:vert_crest_part}}
\qquad
\caption{Comparison between $\xi_{\M}(I,\varphi)$ and $\eta_{\M}(I,\sigma)$ for $\mu = 0.5$, $I = 0.68$ and $I = 0.72$ respectively.\label{fig:hor_vert_prox}}
\end{figure}

To illustrate the current scenario we will display the level curves of the reduced Poincar\'{e} function $\mathcal{L}^*(I,\theta)$ defined in \eqref{eq:reduced_poincare_function}, which up to $\mathcal{O}(\varepsilon^2)$ contain orbits of the scattering map $\mathcal{S}(I,\theta)$.
We begin by considering $\mu = 0.6$ and the horizontal crest $\mathcal{C}_{\M}(I)$. In Fig.~\ref{fig:go_down} we display the scattering map built using $\tau^*$ defined by the first intersection between $R_I(\varphi,\sigma)$ and $\mathcal{C}_{\M}(I)$ from $\sigma =\varphi$ going down along $R_I(\varphi,\sigma)$.
In Fig.\ref{fig:go_up}, we use a similar idea, but now, form $\sigma =\varphi$ going up along $R_I(\varphi,\sigma)$.
Alternatively, if we choose $\tau^*$ with minimal absolute value, independently of going up or down, we obtain the scattering map plotted on Fig.~\ref{fig:min_tau}.
In this last case, there are orbits of the scattering maps that are not smooth in $\theta = \pi$.
This happens because we change the homoclinic manifold $\Gamma$, so we are using, indeed, two different scattering maps.
In \cite{Delshams2017} we chose scattering maps associated to a function $\tau^*$ with the minimal absolute value,
which were called \emph{primary} scattering maps.
This example show us that is not enough to say what crest is associated to a scattering map, but it is also necessary to make
explicit the criterion used for $\tau^*$ (going up or down along the {\NH} lines, or choosing a minimal $\left|\tau^*\right|)$.

\begin{figure}[h]
\centering
\subfigure[Going down along the {\NH} lines $R_I(\varphi,\sigma)$.]{\includegraphics[scale=0.21]{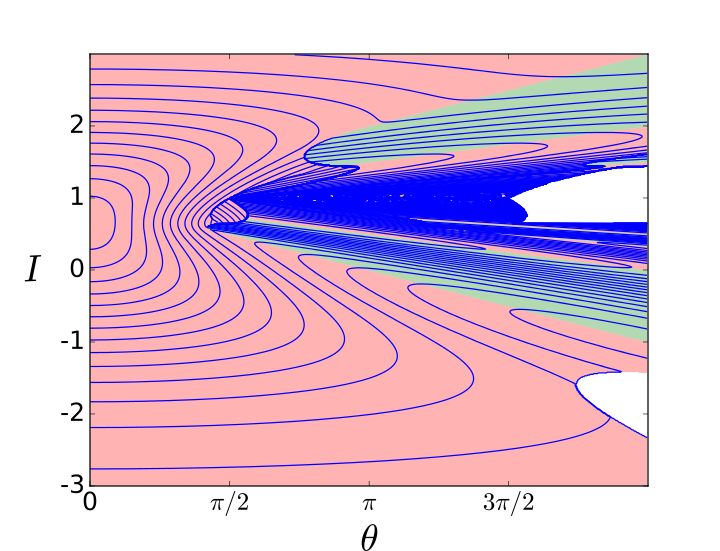}\label{fig:go_down}}
\qquad
\subfigure[Going up along the {\NH} lines $R_I(\varphi,\sigma)$.]{\includegraphics[scale=0.21]{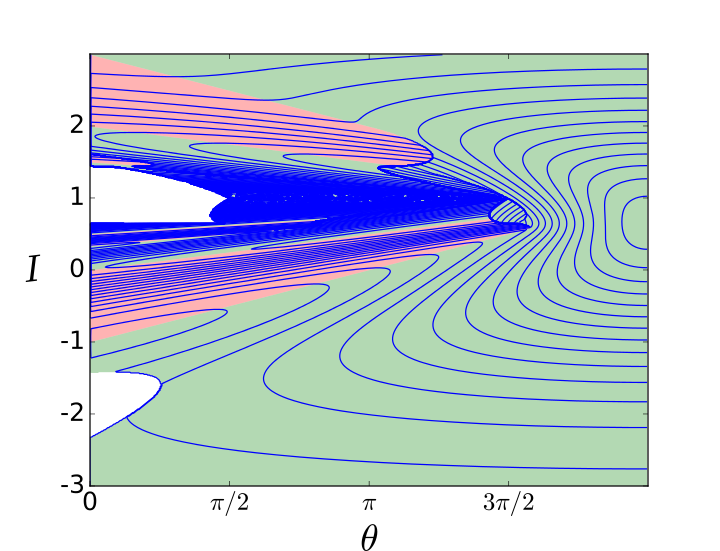}\label{fig:go_up}}
\qquad
\subfigure[Minimal absolute value of $\tau^*$.]{\includegraphics[scale=0.21]{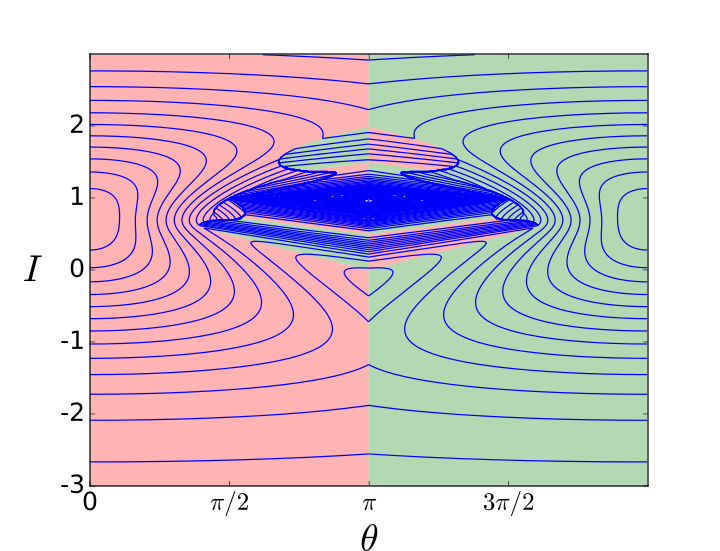}\label{fig:min_tau}}
\qquad
\caption{Different phase space of scattering maps $\mathcal{S}(I,\theta)$ associated to the same horizontal crest $C_{\M}(I)$, for $\mu = 0.6$ and $\varepsilon = 0.01$. The orbits of scattering maps are represented by the blue lines which are, up to $\mathcal{O}(\varepsilon^2)$, level sets of the reduced Poincar\'{e} function $\mathcal{L}^*(I,\theta)$.
In the red zones the values of $I$ on such orbits decrease, in the green one the values of $I$ increase.
The white regions are regions where $\left|\mu\alpha(I)\sin\varphi\right|>1$ is satisfied.}
\end{figure}

The next lemma is a good example about the criteria for $\tau^*(I,\theta)$ and its consequences, and is used to prove Proposition \ref{prop:orb_cres}.
Before, a new notation is introduced.
An \emph{even} subindex $k$ is assigned to the branches $\mathcal{C}_k(I)$ of $\mathcal{C}_{\M}(I)$ when considering $\sigma,\,\varphi \in\mathbb{R}$
\begin{equation*}
\xi_{k}(I,\varphi) = -\arcsin\left(\alpha(I)\mu\sin\varphi\right) + k\pi\quad\text{ and  }\quad\eta_{k} = -\arcsin\left(\frac{\sin\sigma}{\alpha(I)\mu}\right) + k\pi
\end{equation*}
and an \emph{odd} subindex $k$ to the branches $\mathcal{C}_k(I)$ of $\mathcal{C}_{\m}(I)$ when considering $\sigma,\,\varphi \in\mathbb{R}$
\begin{equation*}
\xi_{k}(I,\varphi) = \arcsin\left(\alpha(I)\mu\sin\varphi\right) + k\pi\quad\text{ and  }\quad\eta_{k} = \arcsin\left(\frac{\sin\sigma}{\alpha(I)\mu}\right) + k\pi.
\end{equation*}

We notice that the crests $\mathcal{C}(I)$ are naturally defined for $(\varphi,\sigma)\in\mathbb{T}^2$ and give rise to two different crests $\mathcal{C}_{\M}(I)$, $\mathcal{C}_{\m}(I)$ (except for the singular case $\left|\mu\alpha(I)\right| = 1$).
When we run now over real values of $\varphi,\,\sigma$, we may have an \emph{infinite} number of crests $\mathcal{C}_k(I)$, where even (odd) values of $k$ are assigned to the branches of $\mathcal{C}_{\M}(I)$ ($\mathcal{C}_{\m}(I)$). Among them,
we are going to use $\mathcal{C}_{0}(I)$, $\mathcal{C}_{1}(I)$ and $\mathcal{C}_{2}(I)$.

\begin{lemma}Let $\mathcal{L}_{\zeroM}^*$ and $\mathcal{L}_{\oneM}^*$ be reduced Poincar\'{e} functions associated to the same crest $\mathcal{C}(I)$, where for $\mathcal{L}_{\zeroM}^*$ we look at the first intersection points ``under'' $\sigma = \varphi$, that is, with $\mathcal{C}_0(I)$, and for $\mathcal{L}_{\oneM}^*$ we look at the first intersection points ``over'' $\sigma = \varphi$, that is, with $\mathcal{C}_2(I)$.
Then we have
\begin{equation}
\frac{\partial\mathcal{L}_{\zeroM}^*}{\partial\theta}\left(I,\theta\right) = -\frac{\partial\mathcal{L}_{\oneM}^*}{\partial\theta}\left(I,2\pi - \theta\right).\label{eq:lemma}
\end{equation}
\end{lemma}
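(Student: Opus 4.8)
The plan is to exploit a symmetry of the Melnikov potential $\mathcal{L}$ in~\eqref{eq:our_meln_potential} and of the geometric construction of $\tau^*$ through the {\NH} lines and crests. The key observation is that the reflection $(\varphi,\sigma)\mapsto(-\varphi,-\sigma)$ maps the crest $\mathcal{C}(I)$ to itself (since $\mu\alpha(I)\sin\varphi + \sin\sigma = 0$ is odd under this reflection), while it reverses the diagonal $\sigma=\varphi$ to itself and reverses the orientation of the {\NH} line through a given diagonal point (the velocity vector $\mathbf{v}=-(I,I-1)$ is fixed, but the half-line "under" $\sigma=\varphi$ becomes the half-line "over" $\sigma=\varphi$). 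Concretely, under $\varphi\mapsto-\varphi$ one has $\theta=(1-I)\varphi+I\sigma\mapsto-\theta$, and a point $(\theta,\theta)$ on the diagonal goes to $(-\theta,-\theta)$; the first intersection with $\mathcal{C}_0(I)$ going "down" from $(\theta,\theta)$ corresponds, after reflection, to the first intersection with $\mathcal{C}_2(I)$ going "up" from $(-\theta,-\theta)$, because $\xi_0$ and $\xi_2$ differ by the sign flip $\arcsin(\mu\alpha\sin\varphi)\mapsto-\arcsin(\mu\alpha\sin\varphi)$ composed with a $2\pi$ shift. I would first make this reflection correspondence precise at the level of the defining equation~\eqref{eq:tau^*}: show that if $\tau^*_{\zeroM}=\tau^*_{\zeroM}(I,\theta)$ solves the relevant branch for $\mathcal{L}_{\zeroM}^*$, then $\tau^*_{\oneM}(I,2\pi-\theta)=\tau^*_{\zeroM}(I,\theta)$ (or $=-\tau^*_{\zeroM}(I,\theta)$, depending on the sign convention for which branch is "under" versus "over"), using that $2\pi-\theta\equiv-\theta\pmod{2\pi}$.

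Next, I would translate this into a statement about $\mathcal{L}^*$ itself. By definition~\eqref{eq:reduced_poincare_function} and~\eqref{eq:L^*-def}, $\mathcal{L}^*(I,\theta)=\mathcal{L}(I,\varphi-I\tau^*,s-\tau^*)$ evaluated with $\theta=\varphi-Is$, and since $\mathcal{L}(I,\psi,\varsigma)=A_1(I)\cos\psi+A_2(I)\cos(\psi-\varsigma)$ is even in $(\psi,\varsigma)\mapsto(-\psi,-\varsigma)$, the reflection identity for $\tau^*$ yields directly
\begin{equation*}
\mathcal{L}_{\zeroM}^*(I,\theta) = \mathcal{L}_{\oneM}^*(I,2\pi-\theta).
\end{equation*}
Differentiating both sides with respect to $\theta$ and applying the chain rule to the argument $2\pi-\theta$ on the right produces exactly~\eqref{eq:lemma}, with the minus sign coming from $\partial(2\pi-\theta)/\partial\theta=-1$. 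I would present the argument in this order: (i) establish the reflection symmetry of the crest equation and the {\NH} line geometry; (ii) deduce the relation $\tau^*_{\zeroM}(I,\theta)$ versus $\tau^*_{\oneM}(I,2\pi-\theta)$; (iii) use evenness of $\mathcal{L}$ to get the relation for $\mathcal{L}^*$; (iv) differentiate.

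The main obstacle I anticipate is bookkeeping the index shifts and sign conventions correctly: one must check that reflecting $\mathcal{C}_0$ really lands on $\mathcal{C}_2$ (not $\mathcal{C}_{-2}$ or $\mathcal{C}_0$), and that "first intersection under" maps precisely to "first intersection over" rather than to some later intersection — this requires tracking the direction of the velocity vector $\mathbf v=-(I,I-1)$ and verifying the relative vertical ordering of the branches $\xi_k$ survives the reflection, uniformly in $I$ over the range where both scattering maps are defined. One subtlety is whether the identity should be stated as $\mathcal{L}_{\zeroM}^*(I,\theta)=\mathcal{L}_{\oneM}^*(I,2\pi-\theta)$ or with an additive constant; since only $\partial_\theta$ appears in~\eqref{eq:lemma} any constant is harmless, so I would not belabor it. Once the geometric correspondence of intersection points is pinned down, the analytic part (evenness of $\mathcal L$ plus the chain rule) is immediate, and the constant-shift identity~$\tau^*(I,\varphi-I\gamma,s-\gamma)=\tau^*(I,\varphi,s)-\gamma$ recalled earlier in the excerpt can be invoked to reduce everything cleanly to the variable $\theta$.
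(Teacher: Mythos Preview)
Your proposal is correct and reaches the same conclusion as the paper, but the packaging differs. The paper proceeds by direct computation: it writes the explicit formula $\partial_\theta\mathcal{L}^*(I,\theta)=A_1(I)\sin(\theta-I\tau^*)/(I-1)$, reduces the claim to the identity $\sin(\theta-I\tau_{\zeroM}^*(I,\theta))=\sin(\theta+I\tau_{\oneM}^*(I,2\pi-\theta))$, and then verifies this by plugging the slope $(I-1)/I$ of the {\NH} line into the parametrizations $\xi_{\zeroM}$ and $\xi_{\oneM}=\xi_{\zeroM}+2\pi$ to show (under uniqueness of the intersection) that $-\tau_{\oneM}^*(I,2\pi-\theta)$ lands on the graph of $\xi_{\zeroM}$. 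Your route instead recognizes the map $(\varphi,\sigma)\mapsto(2\pi-\varphi,2\pi-\sigma)$ as a symmetry sending $\mathcal{C}_0(I)$ to $\mathcal{C}_2(I)$, the diagonal point $(\theta,\theta)$ to $(2\pi-\theta,2\pi-\theta)$, and the {\NH} line time $\tau$ to $-\tau$; this yields $\tau_{\zeroM}^*(I,\theta)=-\tau_{\oneM}^*(I,2\pi-\theta)$ conceptually, and then the evenness of $\mathcal{L}$ gives the stronger identity $\mathcal{L}_{\zeroM}^*(I,\theta)=\mathcal{L}_{\oneM}^*(I,2\pi-\theta)$, from which \eqref{eq:lemma} follows by the chain rule. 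The paper's calculation is more explicit and self-contained (no appeal to evenness of $\mathcal{L}$), while your symmetry argument is shorter and yields the identity at the level of $\mathcal{L}^*$ itself rather than only its $\theta$-derivative; the ``first under''/``first over'' bookkeeping you flag is exactly what the paper's uniqueness hypothesis (``in the case that there exists only one intersection point'') is handling.
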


\begin{remark}
We say ``under"  $\sigma = \varphi$  and ``over" $\sigma = \varphi$ for intersection points going down or up along $R_I(\varphi,\sigma)$, respectively on $(\varphi , \xi_{0}(I,\varphi))$ and $(\varphi,\xi_{2}(I,\varphi))$, because when the horizontal crest $\mathcal{C}_{\M}(I)$ is defined for all $\varphi\in\mathbb{T}$ the graphs $(\varphi , \xi_{0}(I,\varphi))$ of $\mathcal{C}_0(I)$ and  $(\varphi,\xi_{2}(I,\varphi))$ of $\mathcal{C}_2(I)$ are under and over the straight line $\sigma = \varphi$.
\end{remark}

\begin{proof}
Let $\mathcal{L}^*$ be a reduced Poincar\'{e} function~\eqref{eq:reduced_poincare_function}-\eqref{eq:our_meln_potential}, then
\begin{equation*}
\frac{\partial\mathcal{L}^*}{\partial\theta}\left(I,\theta\right) = \frac{A_1(I)\sin(\theta - I\tau^*(I,\theta))}{I-1}.\label{eq:diff_L__d_theta}
\end{equation*}
So, equation \eqref{eq:lemma} is satisfied if, and only if
\begin{equation}
\sin(\theta - I\tau_{\zeroM}^*(I,\theta)) = \sin(\theta -I(-\tau_{\oneM}^*(I,2\pi-\theta))).\label{eq:equi_lemma}
\end{equation}

We assume that the crest is horizontal and given by the graph of $\xi_{\M}$, the other cases are analogous.
Indeed, we are going to use
\begin{equation}
\xi_{\zeroM}(I,\varphi) = -\arcsin(\mu\alpha(I)\sin\varphi)\quad \text{ and }\quad \xi_{\oneM}(I,\varphi) = \xi_{\zeroM}(I,\varphi) + 2\pi.\label{eq:new_notation}
\end{equation}
This implies that the intersection point ``under'' $\sigma = \varphi$ is a point on the curve parameterized by $\xi_{\zeroM}(I,\varphi)$.
Otherwise, the intersection ``over'' $\sigma = \varphi $ is a point on the curve parameterized by $\xi_{\oneM}(I,\varphi)$.
As the slope of the {\NH} lines is $(I-1)/I$, given a point $(\theta,\theta)$, we obtain
\begin{equation*}
\frac{\xi_{\oneM}(I,\theta-I\tau_{\oneM}^*(I,\theta)) - \theta}{\theta - I\tau_{\oneM}^*(I,\theta) - \theta} = \frac{I-1}{I},
\end{equation*}
which can be rewritten as
\begin{equation*}
\frac{2\pi + \xi_{\zeroM}(I,\theta - I\tau_{\oneM}^*(I,\theta))- \theta}{- I\tau_{\oneM}^*(I,\theta)} = \frac{I-1}{I}.
\end{equation*}
From this equation, we obtain an expression for $\tau_{\oneM}^*(I,\theta$)
\begin{equation*}
\tau_{\oneM}^*(I,\theta) = \frac{-\left(2\pi + \xi_{\zeroM}(I,\theta - I\tau_{\oneM}^*(I,\theta)) - \theta\right)}{I-1}.
\end{equation*}
From the expressions of $\tau_{\oneM}^*(I,\theta)$ above and \eqref{eq:new_notation},
\begin{equation*}
\tau_{\oneM}^*(I,2\pi-\theta)= \frac{\left(\xi_{\zeroM}(I, \theta -I\left(- \tau_{\oneM}^*(I,2\pi -\theta)\right))+ \theta\right)}{I-1},
\end{equation*}
and therefore
\begin{equation*}
\theta - (I-1)(-\tau_{\oneM}^*(I,2\pi - \theta)) = \xi_{\zeroM}(I,\theta - (I-1)(-\tau_{\oneM}^*(I,2\pi-\theta))),
\end{equation*}
which implies that $-\tau_{\oneM}^*(I,2\pi- \theta)$ is a time of intersection between the {\NH} line and the curve parameterized by $\xi_{\zeroM}$.
In the case that there exists only one intersection point, this implies
\begin{equation*}
\tau_{\zeroM}^*(I,\theta) = \tau_{\oneM}^*(I,2\pi- \theta).
\end{equation*}
So, condition \eqref{eq:equi_lemma} is satisfied.
\end{proof}

\begin{proposition}
Let $\mathcal{S}_{\zerom}(I,\theta)$ be the scattering map associated to the graphs of $\xi_{\zerom}$ and $\eta_{\zerom}$ of $\mathcal{C}_1(I)$.
Assuming $a_1,a_2 >0$, for any $I$ there exists a $\theta_{+}$ such that $\dot{I}>0$ for $\theta \in (\pi, \theta_{+})$.
Moreover, $\theta_{+}\geq 3\pi/2$ for $I\notin(-1/2 , 1/2)$.  \label{prop:orb_cres}
\end{proposition}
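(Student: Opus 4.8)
The plan is to turn the statement into a sign condition for $\partial_\theta\mathcal{L}^*_\zerom$ and then read that sign off the location of a single intersection point of an {\NH} line with the crest $\mathcal{C}_1(I)$.

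\textbf{Reduction.} One step of $\mathcal{S}_\zerom$ changes $I$ by $\varepsilon\,\partial_\theta\mathcal{L}^*_\zerom(I,\theta)+\mathcal{O}(\varepsilon^2)$, so ``$\dot I>0$'' means $\partial_\theta\mathcal{L}^*_\zerom(I,\theta)>0$. By the computation in the proof of the preceding Lemma (valid for any reduced Poincar\'e function),
$$\frac{\partial\mathcal{L}^*}{\partial\theta}(I,\theta)=\frac{A_1(I)\,\sin\big(\theta-I\tau^*(I,\theta)\big)}{I-1},$$
and since $A_1(I)=\dfrac{2\pi I a_1}{\sinh(\pi I/2)}$ with $a_1>0$, we have $A_1(I)>0$ for every $I\in\mathbb R$ (the function $I\mapsto I/\sinh(\pi I/2)$ is positive, with value $2/\pi$ at $0$). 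Writing $\psi=\psi(I,\theta):=\theta-I\tau^*(I,\theta)$ for the $\varphi$-coordinate of the point $R_I(\theta,\theta)\cap\mathcal{C}_1(I)$, we get $\operatorname{sign}\dot I=\operatorname{sign}\big(\sin\psi(I,\theta)/(I-1)\big)$.

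\textbf{Existence of $\theta_+$.} The point $(\varphi,\sigma)=(\pi,\pi)$ lies on $\mathcal{C}_1(I)$ for every $I$ (from $\xi_\zerom(I,\pi)=\eta_\zerom(I,\pi)=\pi$, and from the ``two vertical lines'' description when $I=1$) and on the diagonal $\sigma=\varphi$, i.e. on $\theta=\pi$; hence $\tau^*(I,\pi)=0$, $\psi(I,\pi)=\pi$, $\partial_\theta\mathcal{L}^*(I,\pi)=0$. Parametrizing $\mathcal{C}_1(I)$ near $(\pi,\pi)$ by $\varphi=\psi$ (legitimate there, $|\mu\alpha(I)\sin\psi|$ being small), the condition that $(\psi,\xi_\zerom(I,\psi))$ lie on $R_I(\theta,\theta)$ reads $\theta=\Theta_I(\psi):=(1-I)\psi+I\,\xi_\zerom(I,\psi)$ with $\xi_\zerom(I,\psi)=\pi+\arcsin(\mu\alpha(I)\sin\psi)$, so $\Theta_I(\pi)=\pi$ and $\Theta_I'(\pi)=(1-I)-I\mu\alpha(I)$. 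By \eqref{eq:mu_alpha} the sign of $\alpha(I)$ equals that of $(I-1)/I$, hence (as $\mu>0$) $I\mu\alpha(I)$ has the sign of $I-1$, opposite to $1-I$; so $\Theta_I'(\pi)$ has the sign of $1-I$, in particular $\ne0$ (for $I=1$ use continuity and $(I-1)\alpha(I)\to\frac{\pi/2}{\sinh(\pi/2)}>0$). Thus the intersection at $(\pi,\pi)$ is transverse, and for $\theta=\pi+\epsilon$, $\epsilon>0$ small,
$$\frac{\partial\mathcal{L}^*}{\partial\theta}(I,\pi+\epsilon)=\frac{-A_1(I)\,\epsilon}{(I-1)\,\Theta_I'(\pi)}+\mathcal{O}(\epsilon^2)>0,$$
because $(I-1)\Theta_I'(\pi)$ has the sign of $(I-1)(1-I)<0$. (For $I=0$ the {\NH} line is $\varphi\equiv\theta$, so $\psi\equiv\theta$ and $\partial_\theta\mathcal{L}^*(0,\theta)=-A_1(0)\sin\theta>0$ on $(\pi,2\pi)$.) Continuity of $\partial_\theta\mathcal{L}^*$ then gives $\theta_+>\pi$ with $\dot I>0$ on $(\pi,\theta_+)$.

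\textbf{The bound $\theta_+\ge 3\pi/2$.} For $|I|\ge 1/2$ one must show $\sin\psi(I,\theta)$ keeps the sign of $I-1$ for all $\theta\in(\pi,3\pi/2)$. As $\theta$ grows from $\pi$, the point $R_I(\theta,\theta)\cap\mathcal{C}_1(I)$ moves monotonically along $\mathcal{C}_1(I)$ except at a tangency with an {\NH} line — where $\partial_\theta\mathcal{L}^*$ vanishes and the branch of $\mathcal{S}_\zerom$ continued from $(\pi,\pi)$ terminates — so $\dot I$ can first fail to be positive only when $\psi$ reaches a multiple of $\pi$ or at such a tangency. If $\mathcal{C}_1(I)$ is vertical along the swept arc (which, by the discussion preceding Proposition~\ref{prop:cristas_behave}, happens at least for $I$ in a neighbourhood of $1$), then $\psi=\eta_\zerom(I,\sigma)$ stays within $\arcsin(1/|\mu\alpha(I)|)<\pi/2$ of $\pi$; tracking $\operatorname{sign}\big(\eta_\zerom(I,\sigma)-\pi\big)$ shows $\dot I>0$ until the parameter $\sigma$ meets the next multiple of $\pi$, i.e. until $\theta=(1+I)\pi$ for $I\ge 1/2$, resp.\ $\theta=(1-I)\pi$ for $I\le-1/2$, both $\ge 3\pi/2$; the monotonicity of $\sigma\mapsto(1-I)\eta_\zerom(I,\sigma)+I\sigma$ needed here is exactly the absence of a tangency on that arc, which one verifies. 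If $\mathcal{C}_1(I)$ is horizontal along the swept arc, $\psi$ first meets a multiple of $\pi$ at $\theta=(2-I)\pi$ for $I<1$, resp.\ $\theta=I\pi$ for $I>1$, which are $\ge 3\pi/2$ for $I\le 1/2$ and $I\ge 3/2$; the remaining values $I\in(1/2,3/2)\setminus\{1\}$ for which the arc is horizontal are treated through Proposition~\ref{prop:cristas_behave} and the relations $|\beta(I)|<|\alpha(I)|$ on $(-\infty,0)\cup(0,1/2)$, $|\beta(I)|>|\alpha(I)|$ on $(1/2,1)\cup(1,\infty)$, $|\alpha(1/2)|=|\beta(1/2)|=1$, which place a tangency on the swept arc at a $\theta$-value $\ge 3\pi/2$; the fact that the boundary quantities $(2-I)\pi$ and $(1+I)\pi$ both equal $3\pi/2$ at $I=\pm1/2$ is what pins the threshold. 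The main obstacle is precisely this last bookkeeping: deciding, uniformly in $\mu$, whether the arc of $\mathcal{C}_1(I)$ swept out as $\theta$ runs from $\pi$ to $3\pi/2$ is horizontal or vertical and whether it carries a tangency; once the arc is known to be tangency-free and to lie strictly between two consecutive multiples of $\pi$ up to $\theta=3\pi/2$, the conclusion is immediate from the displayed formula for $\partial_\theta\mathcal{L}^*$.
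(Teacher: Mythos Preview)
Your reduction and overall strategy coincide with the paper's: both use the identity
\[
\frac{\partial\mathcal{L}^*}{\partial\theta}(I,\theta)=\frac{A_1(I)\sin\psi(I,\theta)}{I-1},\qquad \psi=\theta-I\tau^*,
\]
and read off $\dot I>0$ from the location of the intersection point on $\mathcal{C}_1(I)$. For the existence of $\theta_+$, your local computation of $\Theta_I'(\pi)=(1-I)-I\mu\alpha(I)$ and its sign is a clean alternative; the paper does not isolate this step but recovers it implicitly from $\tau^*(I,\pi)=0$ together with the subsequent case analysis.

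For the bound $\theta_+\ge 3\pi/2$, the paper proceeds by a systematic six-way split (horizontal vs.\ vertical crest, each crossed with $I<0$, $0<I<1$, $I>1$) and in every case solves a small linear system for the first $\theta$ at which the relevant coordinate ($\psi$ or its $\sigma$-counterpart) hits a multiple of $\pi$; this yields exactly your thresholds $(2-I)\pi$, $I\pi$, $(1+I)\pi$, $(1-I)\pi$. You recover these correctly and identify the boundary role of $I=\pm 1/2$, but the paper carries out each subcase explicitly (including the geometric justification ``$\theta-I\tau^*_\zerom\in(\pi,3\pi/2)$'' etc.), whereas your treatment compresses this and leaves the monotonicity/no-tangency verification as ``which one verifies''. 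Your handling of the residual range $I\in(1/2,3/2)\setminus\{1\}$ with a horizontal arc, via an appeal to Proposition~\ref{prop:cristas_behave} and the $|\alpha|,|\beta|$ inequalities, is only a sketch---the claim that a tangency lands at $\theta\ge 3\pi/2$ is asserted, not shown. The paper's appendix at the corresponding points simply records the bounds $(2-I)\pi$ and $I\pi$ (noting they fall below $3\pi/2$) without pushing further, so your writeup is not less complete than the paper's here, but it is less explicit in the cases the paper does work out fully.
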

\begin{proof}
A proof is given in Appendix~\ref{app:A}.
\end{proof}
\begin{remark}
If $a_1 < 0$, we have that there exists a $\theta_-$ such that $\dot{I}> 0$  for any $\theta \in (\theta_- , \pi)$.
\end{remark}
\begin{remark}
An analogous proposition holds for $\mathcal{S}_{\oneM}(I,\theta)$, the scattering map associated to the graphs of $\xi_{\oneM}$ and $\eta_{\oneM}$ of $\mathcal{C}_2(I)$.
In such case, there is a $\theta_+$ such that $\dot{I}\geq 0$ for any $\theta\in(\theta_+ , 2\pi)$ where $\theta \geq 3\pi/2$ for $I\in(1/2 , 3/2)$.\end{remark}

Note that this proposition leads us to ensure the diffusion in an analogous way to the one used to prove Theorem 4 in \cite{Delshams2017}.
Next, the diffusion mechanism is stated and the Arnold diffusion is proven.

\section{ Arnold Diffusion }
\label{sec:diffusion}

In this section we are going to complete our goal proving the existence of global instability or Arnold diffusion, that is, Theorem~\ref{theo:main_theo}.

We begin by presenting some general geometrical properties of the scattering maps that we have to take into account to prove the theorem of diffusion.
The first one reduces the study of scattering maps to positive values of $\mu$.
More precisely, we have the lemma below
\begin{lemma}\label{lem:geometrical_lemmas}
The scattering map for a value of $\mu$ and $s = \pi$, associated to the intersection between $R(I,\varphi,s)$ and $C_{\m}(I)$ ($C_{\M}(I)$) has the same geometrical properties as the scattering map for $-\mu$ and $s = 0$, associated to the intersection between $R_{\theta}(I)$ and $C_{\M}(I)$ ($C_{\m}(I)$), i.e.,
\begin{equation*}\label{eq:equivalence_sign_mu}
S^{\mu}_{\text{m}(\M)}(I,\varphi,\pi) = S^{-\mu}_{\text{M}(\m)}(I,\varphi,0) = \mathcal{S}^{-\mu}_{\M(\m)}(I,\theta)
\end{equation*}
\end{lemma}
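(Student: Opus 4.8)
The plan is to track how the defining data of the scattering map---the Melnikov potential $\mathcal{L}$, the crest equation, the $\tau^*$-equation, and the explicit $\mathcal{O}(\varepsilon)$ formula for $S$---transform under the simultaneous substitution $\mu\mapsto-\mu$, $s=\pi\mapsto s=0$. The key observation is that $a_1a_2$ enters only through $A_1(I)=2\pi I a_1/\sinh(\pi I/2)$ and $A_2(I)=2\pi(I-1)a_2/\sinh(\pi(I-1)/2)$, and that changing the sign of $\mu=a_1/a_2$ can be realized (up to an irrelevant overall scaling that does not affect the \emph{geometrical} properties, i.e.\ the shapes of the level curves of $\mathcal{L}^*$ and the orbits of $\mathcal{S}$) by flipping the sign of $A_1$ relative to $A_2$. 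So first I would write out $\mathcal{L}(I,\varphi,s)=A_1(I)\cos\varphi+A_2(I)\cos(\varphi-s)$ at $s=\pi$, getting $A_1(I)\cos\varphi-A_2(I)\cos\varphi$ replaced appropriately; more precisely, evaluating the relevant expressions along the {\NH} line through a point with $s=\pi$ and comparing with the same expressions along the line through the corresponding point with $s=0$ but with $\mu$ negated.

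Concretely, the second step is to examine the crest equation \eqref{eq:crests_equation}, $\mu\alpha(I)\sin\varphi+\sin\sigma=0$ with $\sigma=\varphi-s$. At $s=\pi$ one has $\sigma=\varphi-\pi$, so $\sin\sigma=-\sin\varphi$, and the equation becomes $(\mu\alpha(I)-1)\sin\varphi=0$; at $s=0$ one has $\sigma=\varphi$ and the equation becomes $(\mu\alpha(I)+1)\sin\varphi=0$. Under $\mu\mapsto-\mu$ these two are interchanged. Tracking which branch is which, the crest through $(\varphi,\sigma)=(\pi,\pi)$ (i.e.\ $\mathcal{C}_{\m}$) for parameter $\mu$ at $s=\pi$ corresponds, after the shift $\sigma\mapsto\sigma+\pi$ induced by $s=\pi\mapsto s=0$, to the crest through $(0,0)$ (i.e.\ $\mathcal{C}_{\M}$) for parameter $-\mu$ at $s=0$, and symmetrically $\mathcal{C}_{\M}\leftrightarrow\mathcal{C}_{\m}$. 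This is the content of the parenthetical alternatives in the statement. I would then verify that the $\tau^*$-equation \eqref{eq:tau^*} and the reduced function \eqref{eq:reduced_poincare_function} transform consistently under this correspondence, so that $\tau^*$ and hence $\mathcal{L}^*$ (up to the harmless constant factor and a relabeling of the angle by $\pi$) match; by the explicit formula for $\mathcal{S}(I,\theta)$ as the time-$\varepsilon$ flow of $-\mathcal{L}^*$, the orbit structure---which is all that ``geometrical properties'' refers to---is identical. Finally I would note that $s$ is fixed under the scattering map, so restricting to $s=\pi$ on one side and $s=0$ on the other is legitimate, and record the passage to the reduced variable $\theta=\varphi-Is$, which at $s=0$ gives $\theta=\varphi$, yielding the last equality $S^{-\mu}_{\M(\m)}(I,\varphi,0)=\mathcal{S}^{-\mu}_{\M(\m)}(I,\theta)$.

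The main obstacle I anticipate is bookkeeping of the angle shifts: the substitution $s=\pi\mapsto s=0$ must be accompanied by a shift of $\sigma$ (and correspondingly of $\theta$), and one must be careful that this shift maps $\mathcal{C}_k$ to $\mathcal{C}_{k\pm1}$ in the infinite-branch labeling introduced before the Lemma, so that the ``M $\leftrightarrow$ m'' swap and the sign change of $\mu$ are genuinely consistent rather than off by a branch. A secondary point requiring care is that the overall multiplicative constant relating $\mathcal{L}^*$ for $\mu$ and for $-\mu$ (coming from normalizing $a_2$) rescales time along the scattering-map orbits but not their images; since the statement only claims equality of \emph{geometrical properties}, this is harmless, but it should be stated explicitly so the equalities in \eqref{eq:equivalence_sign_mu} are read at the level of phase portraits. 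I expect no hard analysis here---the lemma is a symmetry statement and the proof is essentially a change of variables in the already-derived formulas of Section~\ref{sec:scattering}.
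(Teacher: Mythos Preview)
Your overall strategy---track how $\mathcal{L}$, the $\tau^*$-equation, and $\mathcal{L}^*$ transform under $(\mu,s)\mapsto(-\mu,s-\pi)$ and conclude that the scattering maps coincide---is exactly the route the paper takes. However, your ``second step'' contains a genuine confusion that would derail the execution.

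The crest $\mathcal{C}(I)$ is a curve in the $(\varphi,\sigma)$-plane defined by $\mu\alpha(I)\sin\varphi+\sin\sigma=0$; it does \emph{not} depend on the initial value of $s$. When you write ``at $s=\pi$ one has $\sigma=\varphi-\pi$'' and reduce the crest equation to $(\mu\alpha(I)-1)\sin\varphi=0$, you are substituting the coordinates of the \emph{initial point} of the {\NH} line into the crest equation, which only tells you whether that one point lies on the crest; it is not a transformed crest equation and carries no information about $\tau^*$. The object that genuinely depends on $s$ is the {\NH} line (its starting point), and hence the intersection time $\tau^*$. The correct computation, which the paper carries out, is to plug $s=\pi$ into the $\tau^*$-equation
\[
\mu\alpha(I)\sin(\varphi-I\tau^*)+\sin(\varphi-\pi-(I-1)\tau^*)=0,
\]
use $\sin(x-\pi)=-\sin x$, and recognize the result as the $\tau^*$-equation for parameter $-\mu$ at $s=0$. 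The branch identification $\mathcal{C}_{\m}\leftrightarrow\mathcal{C}_{\M}$ then comes not from a $\sigma$-shift of the crest equation but from checking the \emph{range} of $\varphi-(I-1)\tau^*\pmod{2\pi}$: since $\varphi-\pi-(I-1)\tau^*_{\m}=\xi_{\m}\in[\pi/2,3\pi/2]$, one gets $\varphi-(I-1)\tau^*_{\m}\in[-\pi/2,\pi/2]\pmod{2\pi}$, which is the range of $\xi_{\M}$.

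Your hedge about an ``irrelevant overall scaling'' is also unnecessary and slightly misleading. Using $\cos(x-\pi)=-\cos x$ in $L^*_{\mu,\m}(I,\varphi,\pi)$ absorbs the sign exactly into $-A_2(I)$, giving the literal equality $L^*_{\mu,\m}(I,\varphi,\pi)=L^*_{-\mu,\M}(I,\varphi,0)$ (with $-\mu$ realized via $a_2\mapsto-a_2$). Hence the scattering maps agree on the nose, not merely at the level of phase portraits.
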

\begin{proof}
First, we look for $\tau^*_{\text{m}}$ such that the {\NH} segment $R(I,\varphi,s)$ intersects the crest $C_{\text{m}}(I)$.
If we fix $s=\pi$, we have from \eqref{eq:our_meln_potential} and \eqref{eq:L^*-def}:
\begin{equation}
L_{\mu,\m}^*(I,\varphi,\pi) = A_{1}(I)\cos(\varphi-I\tau_{\text{m}}^*(I,\varphi,\pi))+A_{2}(I)\cos(\varphi-\pi-(I-1)\tau_{\text{m}}^*(I,\varphi,\pi)).\label{eq:mel_pot_phi_mu_neg}
\end{equation}
Besides, $\tau^*$ satisfies
\begin{equation*}\label{eq:def_tau_mu_neg}
\mu\alpha(I)\sin(\varphi - I\tau_{\text{m}}^*) + \sin(\varphi -\pi-(I-1)\tau_{\text{m}}^*) =0,
\end{equation*}
or
\begin{equation*}
-\mu\alpha(I)\sin(\varphi - I\tau_{\text{m}}^*) +\sin(\varphi-(I-1)\tau_{\text{m}}^*) =0.\label{eq:equ_def_tau_mu_neg_and_pos}
\end{equation*}

We have that $\varphi - \pi -(I-1) \tau_{\text{m}}^* \pmod{2\pi} =  \xi_{\text{m}}(I,\varphi - I\tau_{\text{m}}^*)$ with $\pi/2 \leq\xi_{\text{m}}\leq 3\pi/2$.
Then, for each $\tau^*_{\m}$ there exists a $K\in\mathbb{Z}$ such that
$$\frac{\pi}{2} < \varphi-\pi - (I-1)\tau^*_{\m} + 2\pi K <\frac{3\pi}{2}.$$
This implies
$$\frac{3\pi}{2} < \varphi - (I-1)\tau^*_{\m} + 2\pi K \quad\text{ and }\quad \varphi - (I-1)\tau^*_{\m} + 2\pi (K-1) < \frac{\pi}{2}.$$
Therefore,
$$\varphi - (I - 1)\tau^*_{m} \pmod{2\pi} < \frac{\pi}{2}\quad\text{ or }\quad \varphi - (I - 1)\tau^*_{m} \pmod{2\pi} > \frac{3\pi}{2}.$$
We can conclude that $\varphi - (I - 1)\tau^*_{m} \pmod{2\pi} = \xi_{\M}(I,\varphi - I\tau^*_{\m})$.
Therefore $\tau_{\text{m}}^*(I,\varphi,\pi)$ for $\mu$ is equal to $\tau_{\text{M}}^*(I,\varphi,0)$ for $-\mu$.
From (\ref{eq:mel_pot_phi_mu_neg}), $L^*_{\mu,\m}(I,\varphi,\pi)$ satisfies
\begin{eqnarray*}
L_{\mu,\text{m}}^*(I,\varphi,\pi) &=& A_{1}(I)\cos(\varphi - \tau_{\text{M}}^*(I,\varphi,0)) +(-A_{2}(I))\cos(\varphi -(I-1)\tau_{\text{M}}^*(I,\varphi,0))\\
&=& L_{-\mu,\text{M}}^*(I,\varphi,0).
\end{eqnarray*}

Since $L^*_{\mu,\m}(\cdot,\cdot,\pi)$ and $L^*_{-\mu,\M}(\cdot,\cdot,0)$ coincide, their derivatives too and this implies that $$S^{\mu}_{\text{m}}(I,\varphi,\pi)=S^{-\mu}_{\text{M}}(I,\varphi,0) = \mathcal{S}^{-\mu}_{\text{M}}(I,\theta).$$
\end{proof}

From now on, just to simplify the exposition, $a_1$ and $a_2$ are considered positive.
The same strategy used in \cite{Delshams2017} is applied to prove the existence the diffusion:
we combine the scattering map in an interval of $\theta$ where $\dot{I}>0$ and the inner map to build a diffusion pseudo-orbit.
Then we apply shadowing results to get the existence of a diffusion orbit.

Since $I = 0$ and $I= 1$ are resonance values, the application of the inner map must be more careful, because in these resonance regions, for some orbits, the value of $I$ decreases in order $\mathcal{O}(\sqrt{\varepsilon})$, i. e., the tori cannot be considered flat.
We study the transversality between the foliations of invariant sets of the inner and the scattering map in resonant
and non-resonant regions and its image under the scattering map $\mathcal{S}$.
For more details and a more general case, the reader is referred to \cite{Delshams2009}.

Consider the resonant region associated to $I = 0$.
In such region, the tori can be approximated by $F^{0}(I,\varphi)$ given in \eqref{eq:invariant_tori_i=0}.
The tranversality between invariant sets of the inner and the scattering map holds if the gradient vectors of the level curves of $F^0$ and $\mathcal{L}^*$ are not parallel vectors, or equivalently,
$$\left\{F^0(I,\theta) , \mathcal{L}^*(I,\theta)\right\} \neq 0,$$
where $\left\{ , \right\}$ is the Poisson bracket,
$$\left\{F^0 , \mathcal{L^*}\right\} =  \frac{\partial F^0}{\partial \theta}\frac{\partial \mathcal{L}}{\partial I}-\frac{\partial F^0}{\partial I}\frac{\partial \mathcal{L}}{\partial \theta}.$$
From \eqref{eq:invariant_tori_i=0}, the partial derivatives of $F^0$ are
\begin{eqnarray*}
\frac{\partial F^0}{\partial I} = I &\text{  and  }&\frac{\partial F^0}{\partial \theta} = -\varepsilon a_1\sin\theta,
\end{eqnarray*}
and since $\mathcal{L}^*(I,\theta) = A_1(I)\cos(\theta - I\tau^*(I,\theta)) + A_2(I)\cos(\theta-(I-1)\tau^*(I,\theta))$, we have the partial derivatives given by
$$\frac{\partial \mathcal{L}^*}{\partial \theta} = \frac{A_1(I)\sin(\theta - I\tau^*)}{I-1},$$
$$\frac{\partial \mathcal{L}^*}{\partial I} = A_1'(I)\cos(\theta-I\tau^*) + A_2'(I)\cos(\theta- (I-1)\tau^*) + A_1(I)\tau^* \sin(\theta - I\tau^*) + A_2(I)\tau^*\sin(\theta - (I-1)\tau^*).$$

Note that if $\left|I\right|> \mathcal{O}(\varepsilon)$, $\partial F^0/\partial I$ dominates $\partial F^0/\partial \theta$, so the Poisson bracket above can be reduced to
$$\left\{F^0 , \mathcal{L^*}\right\} \simeq -\frac{\partial F^0}{\partial I}\frac{\partial \mathcal{L}}{\partial \theta} = \frac{-IA_1(I)\sin(\theta - I\tau^*)}{I-1}$$

Expanding $\sin(\theta - I\tau^*)$ in Taylor's series around $I = 0$, we have
$$\sin(\theta - I\tau^*) = \sin\theta + \mathcal{O}(I),$$
which implies $\left\{F^0,\mathcal{L}^*\right\}=0$ if, and only if, $\theta \approx 0,\pi$, assuming that $\mathcal{O}(I)$ is small enough.

Now, we consider $I = \mathcal{O}(\varepsilon)$ and look at the intersections between the {\NH} lines and the graph of $\xi_1$.
Note that as the value of $I$ is close to $0$ we can assume that the crests are horizontal.
Using Taylor's series we can write
\begin{eqnarray*}
\sin(\theta - I\tau^*) = \sin\theta + \mathcal{O}(I) && \cos(\theta - I\tau^*) = \cos\theta +  \mathcal{O}(I)\\
\sin(\theta - (I-1)\tau^*) = \mathcal{O}(I) &&\cos(\theta - (I-1)\tau^*) = -1 +\mathcal{O}(I).
\end{eqnarray*}
This implies
\begin{equation}
\left\{F^0 , \mathcal{L}^*\right\}= -\frac{IA_1(I)\sin\theta}{I-1} -\varepsilon a_1\sin\theta\left(A_1'(I)\cos\theta - A_2'(I) + A_1(I)\tau^*\sin\theta \right) + \mathcal{O}(I^2 ,\varepsilon I).\label{eq:transv_pois_b}
\end{equation}

Taylor expanding the functions $A_1(I)$, $A_1'(I)$ and $A_2'(I)$ around $I = 0$, we obtain
\begin{equation*}
A_1(I) = 4a_1 + \mathcal{O}(I²), \quad A_1'(I) = \mathcal{O}(I) \text{ and }  A_2'(I) = a_2\pi(\pi\coth\pi/2 - 2)\text{csch}(\pi/2) + \mathcal{O}(I)
\end{equation*}
Plugging these expressions in \eqref{eq:transv_pois_b}, we set
$$\left\{F^0 , \mathcal{L^*}\right\} =  -\frac{4a_1I\sin\theta}{I-1} -\varepsilon a_1\sin\theta\left[a_2\pi(\pi\coth\pi/2 - 2)\text{csch}(\pi/2) + 4a_1(\pi - \theta)\sin\theta\right]  + \mathcal{O}(I^2 , I\varepsilon).$$
So, $\left\{F^0 , \mathcal{L^*}\right\} = 0 \Leftrightarrow  a_1\sin\theta\left[\frac{-4I}{I-1}-\varepsilon a_2\pi (\pi\coth\pi/2 - 2)\text{csch}(\pi/2) + \varepsilon4(\pi - \theta)\sin\theta\right]= 0$.
In other words, we do not have transversality if, and only if, $\theta = 0,\pi$ or satisfies
$$(\pi - \theta)\sin\theta = \frac{I}{\varepsilon a_1} + \frac{\pi(\coth\pi/2 - 2)\text{csch}\pi/2)}{4},$$
which is not an horizontal curve in the plane $(\theta , I)$ and is transversal to an invariant torus of the inner dynamics.

For the other resonant region $I = 1$, $F^1$ is very similar.
Assuming $I-1 = \mathcal{O}(\varepsilon)$, we have
\begin{equation*}
\left\{F^1 , \mathcal{L}^*\right\} = a_2\sin\theta \left\{4\left(\frac{I-1}{I}\right) - \varepsilon\left[\pi a_1(2-\pi\coth(\pi/2))\text{csch}(\pi/2) + 4 a_2 \sin\theta \right]\right\}.
\end{equation*}

Applying the same methodology, we obtain an analogous result for the other resonant region $F^1$.
In short, we conclude that the image $\mathcal{S}(\mathcal{T}_i)$ of an invariant torus $\mathcal{T}_i$ of
the inner map under the scattering map intersects tranversally another invariant torus $\mathcal{T}_{i+1}$
of the inner map.

Finally, in the non-resonant region, we notice that
$$\left\{F^{\text{nr}} , \mathcal{L}^*\right\} = -\frac{\partial F^{\text{nr}}}{\partial I}\frac{\partial \mathcal{L}^*}{\partial \theta} = -\frac{IA_{1}(I)\sin(\theta - I\tau^*)}{I-1},$$
just the same expression as the one for the resonance $I = 0$, so the transversality between invariant sets of the inner and the scattering map follows.

Now, a constructive proof of Theorem~\ref{theo:main_theo} is presented.
This proof is similar to the proof presented in \cite{Delshams2017}, but now, there is no any piece of ``highway'' or fast vertical lines where $\left|I\right|$ is large.
So, the inner map is applied more times.

\subsection{Proof of Theorem~\ref{theo:main_theo}}

\begin{proof}
First of all we have to choose what scattering map we use.
This choice depends on the sign of $\mu$ as explained in Lemma~\ref{lem:geometrical_lemmas}.
Assuming $\mu >0$, we take $\mathcal{S}_{\zerom}(I,\theta)$, the global scattering map associated to the graphs of $\xi_{\zerom}$ and $\eta_{\zerom}$.
If $a_1 >0$, by Proposition~\ref{prop:orb_cres} for any $I$ there exists an interval $\theta \in (\pi,\theta_+)$ where $\dot{I}>0$.
Define $H_{\text{r}}$ the set $\left(\rho,\theta_+\right)\times\left[-I^*,I^*\right]$, where $\rho = \pi + \delta$ is such that $\pi <\rho<\theta_+$ and  the transversality between {\NH} lines and $\mathcal{L}_1^*$ holds.
We first construct a pseudo-orbit $\{(I_i,\theta_i): i = 0,\dots,N_1\}\subset H_{\text{r}}$ with $I_0 = -I^*$  and $\theta_{N_1}$ as close as possible to $\rho$.
Note that all these points lie in the same level curve of $\mathcal{L}_{\zerom}^*$, that is, $\mathcal{L}_{\zerom}^*(I_0,\theta_0) = \mathcal{L}_{\zerom}^*(I_i,\theta_i)$, $i = 1,\dots,N_1$.
Applying the inner dynamics, we get $(I_{\text{N}_1 + 1},\theta_{\text{N}_1 + 1}) = \phi_{t_{N_1}}(I_{\text{N}_1},\theta_{\text{N}_1} )$ with $\theta_{\text{N}_1 + 1}\in (\rho , \theta_+)$ and then we construct a pseudo-orbit $\{(I_i,\theta_i): i = N_1 +1,\dots,N_1+M_1\}\subset \mathcal{L}_{\zerom}^*(I_{\text{N}_1+1},\theta_{\text{N}_1+1}) = l_{\text{N}_1 +1}$ with $\theta_{i}\in(\rho,\theta_{\text{N}_1 +1})$, $\theta_+ -\theta_{\text{N}_1 + \M_{1}} = \mathcal{O}(\varepsilon^2)$.
Applying the inner dynamics, we get  $(I_{\text{N}_1 + \M_1+1}, \theta_{\text{N}_1 + \M_1+1})=\phi_{t_{\text{N}_1 + \M_1}}(I_{\text{N}_1 + \M_1}, \theta_{\text{N}_1 + \M_1})$ with $\theta_{\text{N}_1 + \M_1 +1}\in (\rho,\theta_+)$.
Recursively, we construct a pseudo-orbit $\{(I_i,\theta_i): i = \text{N}_1 +1,\dots,\text{N}_2\}$ such that $I_{\text{N}_2}\geq I^*$.
In the same ways as in \cite{Delshams2017} (Theorem 4), we can apply shadowing techniques of \cite{fontich2000,fontich2003,Gidea2014}, due to the fact that the inner dynamics is simple enough to satisfy the required hypothesis of these references, to prove the existence of a diffusion trajectory.
If $a_{10}<0$, changing $H_{\text{r}}$ to $H_{\text{l}}=\left(\theta_+,\pi\right)$ all the previous reasoning applies.
\end{proof}

\begin{remark}Considering  Remark~\ref{rem:r_1}, Remark~\ref{rem:r_2}, Remark~\ref{rem:r_3} and  Remark~\ref{rem:r_4}, for any $r\in(0,1)$, an equivalent diffusion result is readily obtained.
\end{remark}

\section{Piecewise smooth global scattering maps}
\label{sec:piecewise}

In this section, the geometric freedom of the choice of $\tau^*$ is explored.
Until now, only two different scattering maps have been used to build a global one, and this was enough to ensure diffusion.
But, with this approach, finding a diffusion pseudo-orbit is not always easy enough and this pseudo-orbit can be also complicated.
This depends simply on the ``aspect" of the scattering map obtained.

We now suggest a new criterion to choose $\tau^*$: to take the minimal value for $\left|\tau^*\right|$ for any $(\theta , I)$.
This provides us with a piecewise smooth global scattering map with a good property: the phase space of this scattering map which is $\mathcal{O}(\varepsilon^2)$-close to the level sets of the reduced Poincar\'{e} function $\mathcal{L}^*(I,\theta)$ associated to the chosen $\tau^*$ is simpler and ``cleaner'' than the phase spaces of other scattering maps displayed up to now.
By a cleaner scattering map, we mean that we can easily identify and understand the orbits of the scattering maps, except for a small region which contains the tangency locus.

\begin{figure}[h]
\centering
\subfigure[Piecewise scattering map for $\mu = 0.3$.]{\includegraphics[scale=0.29]{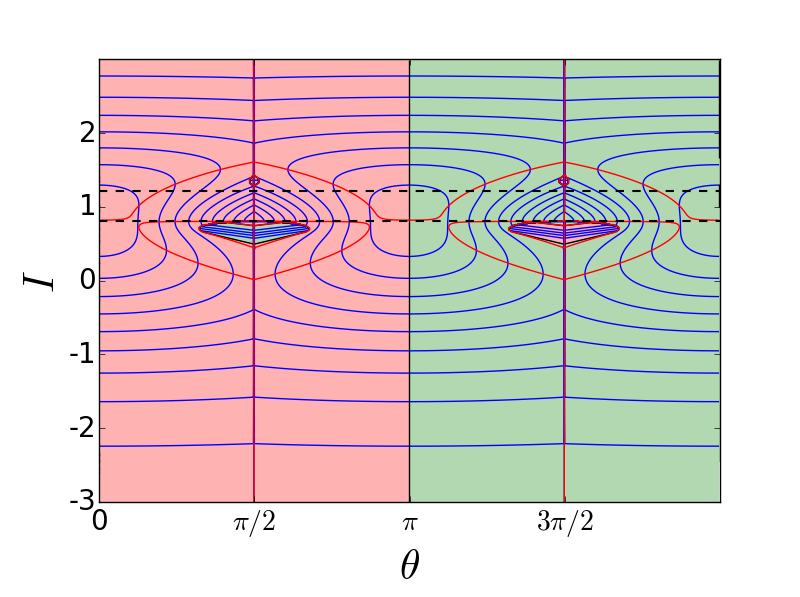}\label{fig:pw_03}}
\subfigure[Piecewise scattering map for $\mu = 0.5$.]{\includegraphics[scale=0.27]{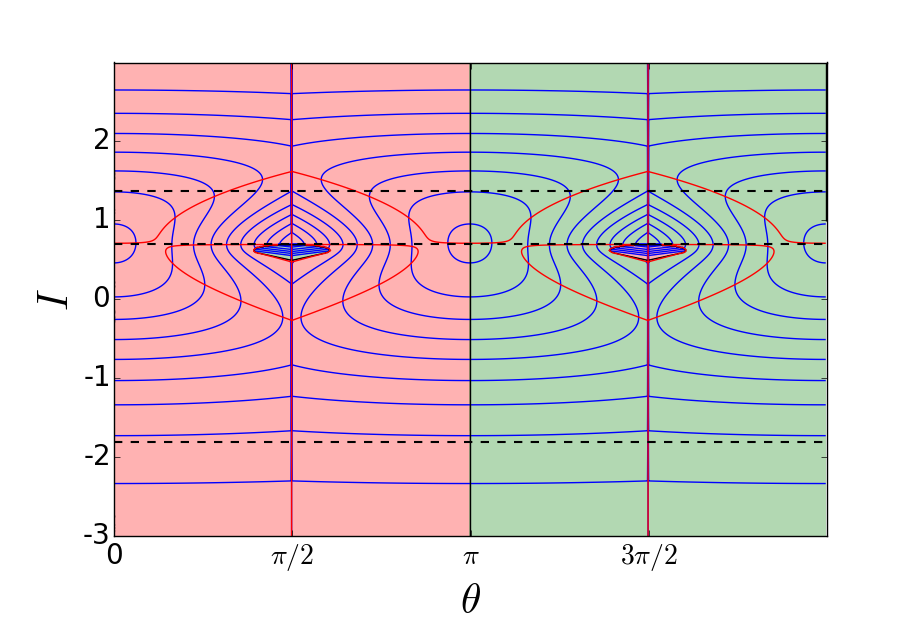}\label{fig:pw_05}}
\subfigure[Piecewise scattering map for $\mu = 0.9$.]{\includegraphics[scale=0.27]{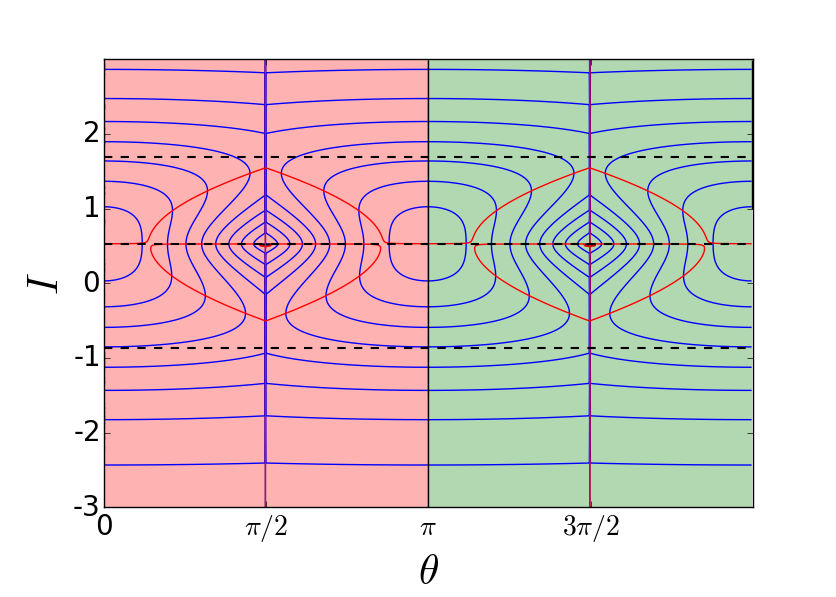}\label{fig:pw_09}}
\subfigure[Piecewise scattering map for $\mu = 1.5$.]{\includegraphics[scale=0.27]{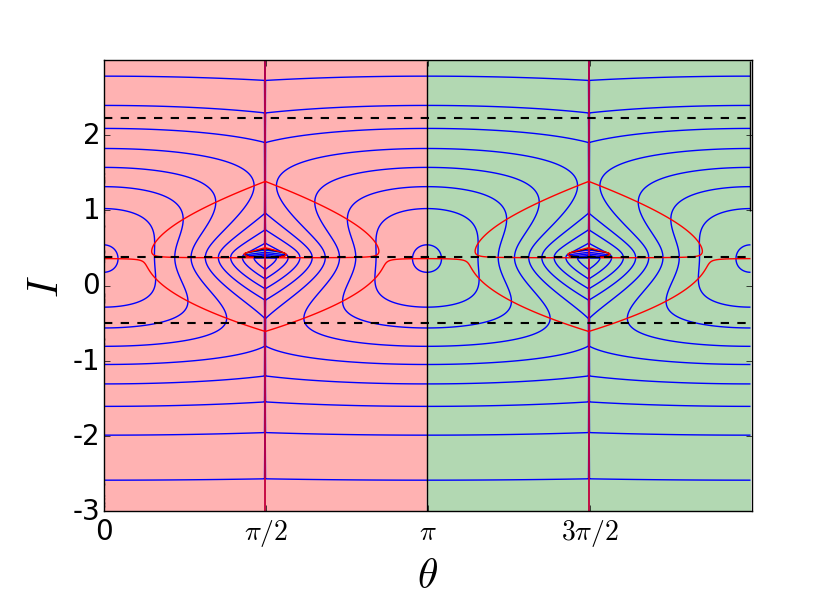}\label{fig:pw_15}}
\caption{Examples of piecewise smooth global scattering maps. The orbits of scattering maps are represented by the blue lines.
In the red zones the values of $I$ on such orbits decrease, in the green one the values of $I$ increase. \label{fig:global_sm_piece}}
\end{figure}

Besides, the zones where the value of $I$ is increased or decreased under the scattering map is well behaved.
$I$ decreases for $\theta \in (0,\pi)$ (the red region on all pictures in Fig.~\ref{fig:global_sm_piece}) and $I$ increases for $\theta  \in (\pi , 2\pi)$ (the green region on all pictures in Fig.~\ref{fig:global_sm_piece}).
So it is easy to infer that for finding a diffusion pseudo-orbit it is enough to build a combination between the inner map and this scattering map restricted to $(\pi , 2\pi)$, for example if an increased value of $I$ is wished.
The same idea used in the proof of Theorem~\ref{theo:main_theo}.

Observe that the scattering maps we are now considering are a mix of the scattering maps studied previously.
As an example, we illustrate the scattering map obtained for $\mu = 0.9$.
Such scattering map can be divided into three regions and in each region, the scattering map coincides with a scattering map studied before.

\begin{figure}[h]
\centering
\includegraphics[scale=0.3]{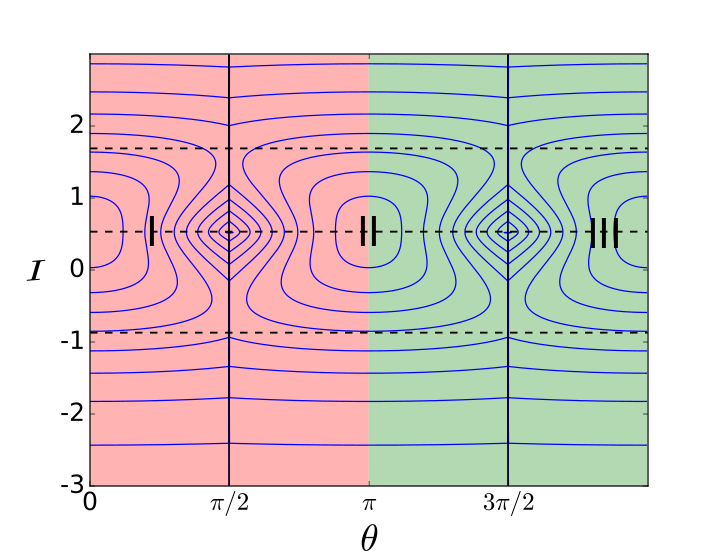}
\caption{A piecewise smooth global scattering map divided into 3 regions. The vertical black lines are the boundaries of
the domains of smooth scattering maps.\label{fig:3_regions}}
\end{figure}

In Fig.~\ref{fig:3_regions}, for regions I ($0<\theta<\pi/2$), II ($\pi/2<\theta<3\pi/2$) and III ($3\pi/2<\theta<2\pi$) the scattering map has the following correspondence:
\begin{itemize}
\item[I] Extended scattering map $\mathcal{S}_0(I,\theta)$ associated to the horizontal $\mathcal{C}_{\M}(I)$ ``under" $\sigma = \varphi$.
\item[II] Extended scattering map $\mathcal{S}_1(I,\theta)$ associated to the horizontal $\mathcal{C}_{\m}(I)$.
\item[III] Extended scattering map $\mathcal{S}_{2}(I,\theta)$ associated to the horizontal $\mathcal{C}_{\M}(I)$ ``over" $\sigma = \varphi$.
\end{itemize}

If extended scattering maps are not considered and we just use scattering maps associated to horizontal and vertical crests, one can see that these scattering maps can be divided into 6 regions, i.e., they can be viewed as a combination of up to 6 scattering maps.

Another property of these scattering maps is the loss of differentiability on the straight lines $\theta = \pi/2$ and $\theta = 3\pi/2$.
The vector field associated to the Hamiltonian $-\mathcal{L}_i^*$ defined around these discontinuity lines behaves as the vector fields studied in non-smooth dynamics theory.
More precisely, we can find regions with slide and unstable slide behavior \cite{Filippov88}.
In a future work, we envisage to design special pseudo-orbits along these discontinuity lines using such theory.
Note that these pseudo-orbits would be very similar to the ``highways" defined in \cite{Delshams2017}, so in principle, one can expect fast and simple diffusion along these discontinuity lines.

\subsection*{Acknowledgments}

The authors would like to express their gratitude to the
anonymous referees for their comments and suggestions which
have contributed to improved the final form of this paper.
We also thank C. Sim\'o for several discussions and comments.

\appendix
\section{Proof of Proposition~\ref{prop:orb_cres} \label{app:A}}
\begin{repproposition}{prop:orb_cres}
Let $\mathcal{S}_{ \zerom}(I,\theta)$ be the scattering map associated to the graphs $\xi_{\zerom}$ and $\eta_{\zerom}$.
Assuming $a_1,a_2 >0$, then for any $I$, there exists a $\theta_{+}$ such that $\dot{I}>0$ for $\theta \in (\pi, \theta_{+})$.
Moreover, $\theta_{+}\geq 3\pi/2$ for $I\notin(-1/2 , 1/2)$.  \end{repproposition}
\begin{proof}
We have
\begin{equation}
\dot{I} =\frac{\partial \mathcal{L}^*}{\partial\theta}(I,\theta) = \frac{A_1(I)\sin(\theta - I\tau^*(I,\theta))}{I-1}  = -\frac{A_2(I)\sin(\theta - (I-1)\tau^*(I,\theta)}{I}.\label{eq:dot_I}
\end{equation}
where $A_{1}(I)$ and $A_{2}(I)$ are positive, because $a_1,\,a_{2}>0$.
Notice that $\mu = a_1/a_2 >0$.

Note that as $(I , \varphi = \pi,\theta = \pi)$ is always on the crest $\mathcal{C}_{\m}(I)$, $\tau^*(I,\pi)= 0 $ for all $I$.

Consider first the case of horizontal crests ($\left|\alpha(I)\mu\right|< 1$).
\begin{itemize}
\item[a)] For $I< 0$, the function $\alpha(I)$ introduced in \eqref{eq:mu_alpha} satisfies $\alpha(I)> 0 $, and from \eqref{eq:cristas_06}, $\sin(\xi_{\zerom}(I,\varphi))\sin\varphi = -\mu \alpha(I)\sin\varphi\leq 0$.
Take $\theta = \frac{3\pi}{2}$; since $I<0$, the slope $m = (I-1)/I$ of the {\NH} lines is greater than 1.
Therefore, $3\pi/2 - I\tau_{\zerom}^*(I,3\pi/2)\in(\pi,3\pi/2)$.
This implies that for any $\theta \in (\pi, 3\pi/2)$, $\theta - I\tau_{\zerom}^*(I,\theta) \in (\pi,3\pi/2)$, so $\sin(\theta - I\tau_{\zerom}^*)<0$.
From \eqref{eq:dot_I}, $\dot{I}>0$.
\item[b)] For $0<I<1$, $\alpha(I)< 0$, so $\sin\xi_{\zerom}(I,\varphi)\sin\varphi\geq 0$.
Besides, $m<0$, so if we look for $\theta_*$ satisfying
\begin{eqnarray}
\theta - I\tau = 2\pi\label{eq:lem_sys_1}\\
\theta - (I-1)\tau = \pi,\nonumber
\end{eqnarray}
we have that for any $\theta\in(\pi,\theta_*)$, $\theta - I\tau^*_{\zerom}\in(\pi,2\pi)$.
By solving \eqref{eq:lem_sys_1} and defining $\theta_+ := \theta_*$, we obtain $\theta_+ = (2-I)\pi$.
Then, $\sin(\theta - I\tau^*_{\zerom}(I,\theta))<0$ and therefore $\dot{I}>0$ for any $\theta \in (\pi,\theta_+ = (2-I)\pi)$.
In particular, $\theta_+ <3\pi/2$ if, and only if, $I\in(1/2,1)$.
\item[c)] For $I >1$, one more time $\alpha(I) > 0$ and $\sin\xi_{\zerom}(I,\varphi)\sin(\varphi)< 0$, but now $ 0 < m = 1 - 1/I < 1 $.
We first fix $ \theta = 3 \pi/ 2 $ and search for $I$ such that
\begin{eqnarray*}
\frac{3\pi}{2} -I\tau^*(I,3\pi/2) = 0\\
\frac{3\pi}{2} - (I - 1)\tau^*(I,3\pi/2) = \pi.
\end{eqnarray*}
We obtain $I = 3/2$, so  $\theta - I\tau^*_{\zerom}(I,\theta) \in (0,\pi)$ for any $I \geq 3/2$ and $\theta \in (\pi , \theta_+ = 3\pi/2)$.
Consequently, $\sin(\theta - I\tau^*_{\zerom}(I,\theta))>0$ and $\dot{I}>0$.
For the values of $I\in(1,3/2)$ we change the strategy.
We look for $\theta_*$ such that
\begin{eqnarray*}
\theta - I\tau^*(I,\theta) = 0\\
\theta - (I-1)\tau^*(I,\theta) = \pi.
\end{eqnarray*}
We have $\theta_* = \pi I$ and $\theta - I\tau^*_{\zerom}(I,\theta_*)\in(0,\pi)$ for any $I\in(1,3/2)$ and $\theta\in(\pi, \theta_*)$, so $\dot{I}>0$.
Note that $\theta_* < 3\pi/2$ and we can define $\theta_+ := \theta_*$.
\end{itemize}
Observe that for $I = 1 $ the crests are vertical, and for $I=0$, $\theta = \theta - I\tau^*_{\zerom}(I,\theta)$, and $\dot{I}>0$ for $\theta \in (\pi,3\pi/2)$.

Consider now the case of vertical crests ($\left|\alpha(I)\mu\right|>1$).
\begin{itemize}
\item[a)]For $I< 0$, $\sin\eta_{\zerom}(I,\sigma)\sin\sigma = -\mu\alpha(I)\sin^2\sigma \leq 0 $ and $m>1$.
We fix $\theta = 3\pi/2$ and look for $I$ such that
\begin{align*}
3\pi/2\pi - I\tau^* &= \pi\\
3\pi/2 - (I-1)\tau^*(I,3\pi/2) &= 0.
\end{align*}
We obtain $I = -1/2$ and therefore, $\sin(\theta - (I-1)\tau^*_{\zerom}(I,\theta))> 0$ for $I\in(-\infty,-1/2)$ and $\theta \in(\pi,3\pi/2)$.
Consequently, $\dot{I} > 0$ from \eqref{eq:dot_I}.
For $I\in(-1/2 , 0)$, we have that $\theta_+ = (1-I)\pi$ satisfies
\begin{align*}
\theta - I\tau^*(I,\theta_+) & = \pi\\
\theta_+ -(I-1)\tau^*(I,\theta_+) & = 0.
\end{align*}
Therefore, $\sin(\theta - (I-1)\tau^*_{\zerom})(I,\theta) >0$ and $\dot{I}>0$ for any $\theta \in  (\pi,\theta_+)$.
\item[b)] For $ 0 < I < 1$ $\sin\eta_{\zerom}(I,\sigma)\sin\sigma \geq 0 $ and $m<0$.
$\theta_+ = (I + 1)\pi$ satisfies
\begin{align*}
\theta - I\tau^*(I,\theta_+) &= \pi\\
\theta_+ -(I-1)\tau^*(I,\theta_+)& = 2\pi.
\end{align*}
So, $\sin(\theta - (I-1)\tau^*_{\zerom}(I,\theta))>0$ and $\dot{I}>0$ for any $\theta \in(\pi,\theta_+)$.
Note that $\theta_{+} <3\pi/2$ for $I \in (0,1/2)$.
\item[c)] Finally, for $I > 1$, $\sin\eta_{\zerom}(I,\sigma) \sin\sigma \leq 0$.
We have that $\theta - (I-1)\tau^*_{\zerom}(I,\theta)\in(\pi,2\pi$), so $\sin(\theta - (I-1)\tau^*_{\zerom}(I,\theta))<0$  and $\dot{I}> 0$ for any $\theta\in(\pi,3\pi/2)$.
\end{itemize}
For $I = 0$ the crests are horizontal. For $I = 1$, $\theta = \theta - (I-1)\tau^*_{\zerom}(I,\theta)$, so $\dot{I}>0$ for $\theta \in(\pi,2\pi)$.

\end{proof}

\bibliography{references}

\end{document}